\newtheorem{theorem}{Theorem}[section]
\newtheorem{corollary}[theorem]{Corollary}
\newtheorem{lemma}[theorem]{Lemma}
\theoremstyle{definition}
\newtheorem{definition}[theorem]{Definition}
\theoremstyle{definition}
\newtheorem{example}[theorem]{Example}
\newtheorem{remark}[theorem]{Remark}
\numberwithin{equation}{section}
\begin{document}

\title{On contact and symplectic Lie algebroids}

\author[Nazari, Heydari]{Esmail Nazari, Abbas Heydari} 


\subjclass[2010]{Primary 53C15; Secondary 53D17} 

\keywords{Lie algebroid, Symplectic Lie algebroid, Contact Lie algebroid }



\address{%
{\bf Esmail Nazari}\\ Department of Mathematics\\
Faculty of Mathematical Sciences
\\Tarbiat Modares University\\Tehran 14115-134, Iran\\
e.nazari@modares.ac.ir}

\address{%
{\bf Abbas Heydari}\\ Department of Mathematics\\
Faculty of Mathematical Sciences
\\ Tarbiat Modares University \\Tehran 14115-134, Iran\\
aheydari@modares.ac.ir}

\date{}

\maketitle
\begin{abstract}
In this paper, we will study compatible triples on Lie algebroids. Using a suitable decomposition for a Lie algebroid, we construct an integrable generalized distribution on the base manifold. As a result, the symplectic form on the Lie algebroid induces a symplectic form on each integral submanifold of the distribution. The induced Poisson structure on the base manifold can be represented by means of the induced Poisson structures on the integral submanifolds. Moreover, for any compatible triple with invariant metric and admissible almost complex structure, we show that the bracket annihilates on the kernel of the anchor map.
\end{abstract}

\section{Introduction}
Symplectic geometry is motivated by the mathematical formalism of classical mechanics. On the other hand, in the last decades, the Lie algebroids have an important place in the context of some different categories in differential geometry and mathematical physics and represent an active domain of research.(\cite{AN},\cite{MB}, \cite{FB}, \cite{CM}) The Lie algebroids, are generalizations of Lie algebras and integrable distributions\cite{KM}. In fact a Lie algebroid is an anchored vector bundle with a Lie bracket on the module of sections and many geometrical notions which involves the tangent bundle were generalized to the context of Lie algebroids.
 The notion of symplectic Lie algebroid is introduced in  \cite{RN}.

The current paper contains four sections. In section two we review basic definitions and facts about Lie algebroids and decompositions of transitive Lie algebroids based on \cite{FB}. Invariant metrics on transitive Lie algebroids are also mentioned in this section. Section 3 includes two parts. The first one is dedicated to symplectic Lie algebroids, almost complex structures  and compatible triples on Lie algebroids. In the second part we have used a compatible triple  to give a decomposition for a Lie algebroid. This decomposition induces an integrable generalized distribution on the base manifold. We investigate the relation between the Poisson structures on the integral submanifolds and Poisson structure on the base manifold. Finally in section 4 we discuss contact Lie algebroids, mainly based on \cite{CIC}. We find coditions for the contact form of the Lie algebroid to induce a Poisson structure on the base manifold.
\section{Lie algebroids}
\begin{definition}
A Lie algebroid $\mathcal{A}$ over a smooth manifold $M$ is a vector bundle $\pi: \mathcal{A} \rightarrow M$ together with a Lie algebra structure $[\, , ]$ on the space $\Gamma(\mathcal{A})$ of sections and a bundle map $\rho : \mathcal{A} \rightarrow TM$ called the anchor map such that
\begin{enumerate}
\item The induced map $\rho : \Gamma(\mathcal{A}) \longrightarrow \mathcal{X}(M)$ is a homomorphism of Lie algebras, that is, $\rho([S_1,S_2]_\mathcal{A})=[\rho(S_1),\rho(S_2)]$ for $S_1,S_2\in\Gamma(\mathcal{A})$.
\item For any sections $S_1,S_2\in\Gamma(\mathcal{A})$ and for every smooth function $f\in C^\infty(M)$
the Leibniz identity $[S_1,fS_2]=f[S_1,S_2]+(\rho(S_1)\cdot f)S_2$ is satisfied.
\end{enumerate}
\end{definition}
The basic example of a Lie algebroid over $M$ is the tangent bundle $TM$ with the identity map as the anchor map.  \\
Lie algebroids can also be smaller or larger than $TM$.  Any integrable distribution of $TM$ is a Lie algebroid with the induced bracket and the inclusion as the anchor map. On the other hand, any Lie algebra $\mathfrak{g} $ is
a Lie algebroid over a point.  \\
An important operator associated with a Lie algebroid $(\mathcal{A},\rho,[\,,])$ over a manifold $M$ is
the exterior derivative $d^\mathcal{A} : \Gamma\big(\wedge^k\mathcal{A^*}\big)\longrightarrow\Gamma \big(\wedge^{k+1}\mathcal{A^*}\big)$ of $\mathcal{A}$ which is defined as follows
$$\begin{array}{ll}
 d^\mathcal{A}(\eta)(S_0,...,S_{k})=&\sum\limits_{i=0}^k(-1)^i\rho(S_i)\cdot\eta(S_1,...,\widehat{S_i},...,S_k)\\ \\
&+\sum\limits_{i<j=1}^k(-1)^{i+j}\eta([S_i,S_j],S_1 ,...,\widehat{S_i},...,\widehat{S_j},...,S_k)
\end{array}$$
for $\eta \in \Gamma(\wedge^k\mathcal{A^*})$ and $S_0,...,S_k \in \Gamma(\mathcal{A})$. It follows that $(d^\mathcal{A})^2=0$.(\cite{CM})\\
Moreover, if $S$ is a section of $\mathcal{A}$, one may introduce, in a natural way, the Lie derivative with respect to $S$, as the operator $\mathcal{L}:\Gamma\big(\wedge^k\mathcal{A^*}\big)\longrightarrow\Gamma \big(\wedge^{k}\mathcal{A^*}\big)$ given by
$$\mathcal{L}_S=i_S\circ\mathrm{d}+\mathrm{d}\circ i_S$$
where $i_S$ is the inner contraction with $S$. Exactly like ordinary manifolds, the usual property $\mathcal{L}_S\circ\mathrm{d}=\mathrm{d}\circ\mathcal{L}_S$ holds here(see\cite{EL}), as well as the relations
$$i_{[S,T]}=[\mathcal{L}_{S},i_{T}]\ \   \ \   \ \    \ \    \ \  [\mathcal{L}_{S},\mathcal{L}_{T}]=\mathcal{L}_{[S,T]}.$$
\begin{example}
	Let $(M,\{\,,\})$ be a Poisson manifold equipped with a bivector $\pi$. Consider the Lie bracket $[\,,]_\pi$ on $\Gamma(T^*M)$ as follows
	$$[\alpha,\beta]=\mathcal{L}_{\pi^\sharp(\alpha)}\beta-\mathcal{L}_{\pi^\sharp(\beta)}\alpha-d(\pi(\alpha,\beta)$$
	for $\alpha,\beta\in \Gamma(T^*M).$ Then $(T^*M,\pi^\sharp,[\,,]_\pi)$ is a Lie algebroid over $M$.\cite{FR}
\end{example}
\begin{definition}
A Lie algebroid $(\mathcal{A},\rho,[\,,])$ is called {\it{transitive}} (respectively {\it regular}) if $\rho$ is surjective (respectively constant rank).
\end{definition}
An immediate consequence of this definition is that, for any $p$ in $M$, there
is an induced Lie bracket say ${[\, , ]}_{p}$ on
$$L_p= Ker(\rho_p) \subset A_p$$
which makes it into a Lie algebra\cite{KM}. For a Lie algebroid $(\mathcal{A}, \rho, [\,,])$ over $M$, the image of $\rho$ defines a smooth integrable generalized distribution in $M$(\cite{MC}). The derived foliation is called the{\it characteristic foliation} of $\mathcal{A}$. Let $N$ be any leaf of the characteristic foliation of $\mathcal{A}$ on $M$, it is easy to see that the bracket of $\mathcal{A}$ deduce the bracket on the space of section of the restriction $\mathcal{A}_N$ of $\mathcal{A}$ to $N$. Then $\rho_{|_{\mathcal{A}_N}} :\mathcal{A}_N \rightarrow TN$ is a transitive Lie algebroid(\cite{L}).
\begin{definition}\cite{EL}
	Let $(\mathcal{A},[\,,]_\mathcal{A},\rho_\mathcal{A})$ and  $(\mathcal{B},[\,,]_\mathcal{B},\rho_\mathcal{B})$ be two Lie algebroids over smooth manifolds $M$ and $M^\prime$, respectively. A vector bundle map $(\Phi,\phi):\mathcal{A}\rightarrow\mathcal{B}$ is called a {\it morphism of Lie algebroids} if for every $\alpha\in\Gamma(\wedge^k\mathcal{B^*})$ we have
	\begin{equation}\label{dphi}	\Phi^*(\mathrm{d}_\mathcal{B}\alpha)=\mathrm{d}_\mathcal{A}\big(\Phi^*(\alpha)\big)
	\end{equation}
	where  $\Phi^*:\wedge^k\mathcal{B}^*\longrightarrow\wedge^k\mathcal{A}^*$ is defined by
	$$\Phi^*(\alpha)_p(S_1,...,S_n)=\alpha_{\phi(p)}\big(\Phi(S_1),...,\Phi(S_n)\big)$$
	for $\alpha\in\Gamma(\wedge^k\mathcal{B}^*)$, $p\in M$ and $S_1,...,S_n\in \mathcal{A}_p$.
\end{definition}
It is easy to see that $(\Phi,\phi)$ is a morphism of Lie algebroids if for every $S\in\Gamma\mathcal{A}$, $\rho_\mathcal{B}(\Phi(S))=\phi_*(\rho_\mathcal{A}(S))$ and equation (\ref{dphi}) holds for every 1-form section of $\mathcal{B}$. Moreover, if $\phi$ is diffeomorphism then $\Phi$ maps any section of $\mathcal{A}$ to a section of $\mathcal{B}$. In this case, $\Phi$ is a morphism of Lie algebroids if $\rho_\mathcal{B}\circ\Phi=\phi_*\circ\rho_\mathcal{A}$ and $\Phi([S_1,S_2]_\mathcal{A})=[\Phi(S_1),\Phi(S_2)]_\mathcal{B}$ for any $S_1,S_2\in\Gamma\mathcal{A}$(\cite{EL}).\\
For a transitive Lie algebroid, $L=ker \rho$ is a bundle of Lie algebras
\cite{KM}. Suppose $(\mathcal{A},\rho,[\,,])$ is a transitive Lie algebroid, then a vector bundle
map $\lambda : TM\rightarrow \mathcal{A}$ such that $\rho \circ\lambda=1_{TM}$ is a splitting of $(\mathcal{A},\rho,[\,,])$ i,e., we can decompose $\mathcal{A}$ to $ E+L$ of vector bundles, where $E = \lambda(TM)$. \\
Fix a $\lambda : TM\rightarrow \mathcal{A}$ splitting of $\rho$. The map $\lambda$ defines a linear connection on $L$, called an adjoint connection as follows.
\begin{align*}
\nabla^\lambda : \mathcal{X}(M)\times\Gamma(L)\rightarrow\Gamma(L)\\
\nabla^\lambda_XT:=[\lambda(X),T]
\end{align*}
The 2-form $\Omega^\lambda$ in $A^2(M,L)$ is defined as follows
$$2\Omega^\lambda(X,Y)=[\lambda(X),\lambda(Y)]-\lambda([X,Y])$$
and is called the curvature 2-form.
The Lie bracket on $\Gamma(\mathcal{A})$ with respect to the decomposition $\mathcal{A}=E+L$ is written as follows
$$[\![\lambda(X)+S,\lambda(Y)+T]\!]=\lambda([X,Y])+\nabla^\lambda_XT-\nabla^\lambda_YS+[S,T]+\Omega(X,Y).$$
Conversely if $L$ is a bundle of Lie algebras, and $\nabla$ is a connection on $L$ that preserves the Lie bracket and the curvature of $\nabla$ is in the form $[2\Omega(X,Y),S]$ for $S\in \Gamma(L)$ and some $\Omega \in A^2(M,L)$, then we can make $TM+L$ into a transitive Lie algebroid by defining a Lie bracket on $\Gamma(TM+L)$ as follows
$$[\![X+S,Y+T]\!]=[X,Y]+\nabla_XT-\nabla_YS+[S,T]+\Omega(X,Y).$$
For more details, see\cite{FB}. So we have the following theorem:
\begin{theorem}\cite{FB}\label{split}
All transitive Lie algebroids have the above structure.
\end{theorem}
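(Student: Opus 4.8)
The plan is to start from the defining property of a transitive Lie algebroid, namely that the anchor $\rho:\mathcal{A}\to TM$ is a surjective bundle map, and to exploit the resulting short exact sequence of vector bundles $0\to L\to\mathcal{A}\xrightarrow{\rho} TM\to 0$, where $L=\ker\rho$. First I would observe that such a sequence always splits: choosing a Riemannian metric on $\mathcal{A}$ (or using a partition of unity) produces a bundle map $\lambda:TM\to\mathcal{A}$ with $\rho\circ\lambda=1_{TM}$. This yields the vector-bundle decomposition $\mathcal{A}=E\oplus L$ with $E=\lambda(TM)\cong TM$, so that every section is uniquely of the form $\lambda(X)+S$ with $X\in\mathcal{X}(M)$ and $S\in\Gamma(L)$.

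Next I would check that all the auxiliary objects appearing in the bracket formula are well defined. Because $\rho$ is a Lie algebra homomorphism, for $S,T\in\Gamma(L)$ we have $\rho([S,T])=[\rho(S),\rho(T)]=0$, so $[S,T]\in\Gamma(L)$; since $\rho$ vanishes on $L$ the Leibniz identity shows this bracket is $C^\infty(M)$-bilinear, hence $L$ is a bundle of Lie algebras. The same vanishing gives $\rho(\nabla^\lambda_X S)=\rho([\lambda(X),S])=[X,0]=0$, so $\nabla^\lambda_X S\in\Gamma(L)$, and the Leibniz identity together with $\rho(\lambda(X))=X$ shows $\nabla^\lambda_{fX}S=f\nabla^\lambda_X S$ and $\nabla^\lambda_X(fS)=f\nabla^\lambda_X S+(X\cdot f)S$, so $\nabla^\lambda$ is a genuine connection on $L$. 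Similarly $\rho\big([\lambda(X),\lambda(Y)]-\lambda([X,Y])\big)=[X,Y]-[X,Y]=0$, and a short Leibniz computation shows the expression is $C^\infty(M)$-bilinear and alternating in $X,Y$; hence $\Omega^\lambda\in A^2(M,L)$ is a well-defined $L$-valued $2$-form.

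With these in hand, the bracket formula itself is immediate: for two sections I would expand $[\lambda(X)+S,\lambda(Y)+T]$ by bilinearity into the four terms $[\lambda(X),\lambda(Y)]$, $[\lambda(X),T]$, $[S,\lambda(Y)]$ and $[S,T]$, then substitute $[\lambda(X),\lambda(Y)]=\lambda([X,Y])+2\Omega^\lambda(X,Y)$, $[\lambda(X),T]=\nabla^\lambda_X T$ and $[S,\lambda(Y)]=-\nabla^\lambda_Y S$ from the definitions, recovering exactly the displayed decomposition of the bracket (where the $2$-form written $\Omega$ there is $2\Omega^\lambda$).

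Finally, to justify that this is precisely ``the above structure'' --- i.e.\ that $(L,\nabla^\lambda,\Omega^\lambda)$ satisfies the compatibility conditions of the reconstruction --- I would feed carefully chosen triples of sections into the Jacobi identity for $[\,,]_\mathcal{A}$. Applying it to $(\lambda(X),S,T)$ yields that $\nabla^\lambda$ preserves the fiberwise bracket, $\nabla^\lambda_X[S,T]=[\nabla^\lambda_X S,T]+[S,\nabla^\lambda_X T]$; applying it to $(\lambda(X),\lambda(Y),S)$ expresses the curvature $R^{\nabla^\lambda}(X,Y)S=[\nabla^\lambda_X,\nabla^\lambda_Y]S-\nabla^\lambda_{[X,Y]}S$ as the inner derivation $[2\Omega^\lambda(X,Y),S]$; and applying it to $(\lambda(X),\lambda(Y),\lambda(Z))$ produces the Bianchi-type identity for $\Omega^\lambda$. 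This last step is the main obstacle: it is pure bookkeeping, but one must track the connection, curvature and bracket terms simultaneously and use the already-established tensoriality to discard the non-tensorial pieces. Conversely, the construction preceding the theorem shows that any such triple $(L,\nabla,\Omega)$ reassembles into a transitive Lie algebroid, so the two descriptions coincide and the claim follows.
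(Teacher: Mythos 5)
Your proof is correct and follows exactly the construction the paper sketches immediately before the theorem (the paper itself supplies no proof, deferring to the cited reference): split the anchor sequence $0\to L\to\mathcal{A}\xrightarrow{\rho}TM\to 0$, verify that $\nabla^\lambda$ and $\Omega^\lambda$ are well defined and tensorial, expand the bracket of $\lambda(X)+S$ and $\lambda(Y)+T$ by bilinearity, and extract the compatibility conditions (bracket-preservation of $\nabla^\lambda$, curvature equal to the inner derivation by $2\Omega^\lambda(X,Y)$, and the Bianchi identity) from the Jacobi identity applied to the three types of triples. Your observation that the $\Omega$ appearing in the displayed bracket formula must be read as $2\Omega^\lambda$ correctly flags a genuine notational inconsistency in the text.
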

\begin{definition}\cite{FB}
A Riemannian metric $g$ on $\mathcal{A}$ is said to be {\it invariant} if all adjoint connections of $\mathcal{A}$ preserve the restriction $g_L$ of $g$ to $L$, i.e., for every $\lambda$ and $X \in \mathcal{X}(M)$, $\nabla^\lambda_X g_L=0$.
\end{definition}
Having an invariant Riemannian metric $g$ one can write
$$g([S_1,S_2],S_3)=g(S_1,[S_2,S_3])$$
for $S_1,S_2,S_3\in\Gamma(L)$.
\begin{theorem} \cite{FB}  \label{DL}
If $g$ is an invariant metric on $\mathcal{A}$ and $\nabla$ is the Levi-Civita connection of $\mathcal{A}$ then
\begin{equation}
\nabla_{S_1}S_2=\frac{1}{2}[S_1,S_2]
\end{equation}
for every $S_1,S_2\in \Gamma(L)$.
\end{theorem}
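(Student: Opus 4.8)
The plan is to compute the Levi-Civita connection through the Koszul formula adapted to Lie algebroids, and then to collapse all but one of its terms by invoking the two different manifestations of the invariance of $g$. Recall that the Levi-Civita connection $\nabla$ of $(\mathcal{A},g)$ is the unique torsion-free, metric connection, and is characterized by
\begin{align*}
2g(\nabla_{S_1}S_2,S_3)={}&\rho(S_1)\cdot g(S_2,S_3)+\rho(S_2)\cdot g(S_1,S_3)-\rho(S_3)\cdot g(S_1,S_2)\\
&+g([S_1,S_2],S_3)-g([S_1,S_3],S_2)-g([S_2,S_3],S_1)
\end{align*}
for all $S_1,S_2,S_3\in\Gamma(\mathcal{A})$. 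Since the claimed formula is an equality of sections of $\mathcal{A}$ and $g$ is nondegenerate, it suffices to verify $2g(\nabla_{S_1}S_2,S_3)=g([S_1,S_2],S_3)$ for \emph{every} $S_3\in\Gamma(\mathcal{A})$ under the standing hypothesis $S_1,S_2\in\Gamma(L)$; note that $[S_1,S_2]\in\Gamma(L)$ because $\rho([S_1,S_2])=[\rho(S_1),\rho(S_2)]=0$.

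First I would substitute $S_1,S_2\in\Gamma(L)=\Gamma(\ker\rho)$, so that $\rho(S_1)=\rho(S_2)=0$ and the first two Koszul terms drop out, leaving $2g(\nabla_{S_1}S_2,S_3)=g([S_1,S_2],S_3)-\rho(S_3)\cdot g(S_1,S_2)-g([S_1,S_3],S_2)-g([S_2,S_3],S_1)$. Thus the theorem reduces to proving that the last three of these terms sum to zero. I would observe that $L$ is an ideal, since $\rho([S_i,S_3])=[0,\rho(S_3)]=0$ gives $[S_1,S_3],[S_2,S_3]\in\Gamma(L)$, and then, using a splitting $\lambda:TM\to\mathcal{A}$ of $\rho$, decompose an arbitrary $S_3$ as $S_3=\lambda(Z)+T_3$ with $Z=\rho(S_3)$ and $T_3\in\Gamma(L)$, so that $[S_i,S_3]=-[\lambda(Z),S_i]+[S_i,T_3]$.

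The heart of the argument is to feed this decomposition into the two invariance identities. The $T_3$-part contributes $g([S_1,T_3],S_2)+g([S_2,T_3],S_1)$, which vanishes by the ad-invariance $g([A,B],C)=g(A,[B,C])$ on $\Gamma(L)$ together with antisymmetry of the bracket: rewriting each term as $\mp g(T_3,[S_1,S_2])$ makes the two cancel. The $\lambda(Z)$-part contributes $-\bigl(g([\lambda(Z),S_1],S_2)+g([\lambda(Z),S_2],S_1)\bigr)$, and here the definition of an \emph{invariant} metric is exactly what is needed: $\nabla^{\lambda}_Z g_L=0$ reads $Z\cdot g(S_1,S_2)=g([\lambda(Z),S_1],S_2)+g(S_1,[\lambda(Z),S_2])$, so by symmetry of $g$ this part equals $-\rho(S_3)\cdot g(S_1,S_2)$. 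Combining, $g([S_1,S_3],S_2)+g([S_2,S_3],S_1)=-\rho(S_3)\cdot g(S_1,S_2)$, whence $-\rho(S_3)\cdot g(S_1,S_2)-g([S_1,S_3],S_2)-g([S_2,S_3],S_1)=0$, as required.

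The main obstacle is precisely the surviving term $\rho(S_3)\cdot g(S_1,S_2)$, which appears exactly when $S_3\notin\Gamma(L)$: the bare ad-invariance on $L$ cannot absorb it, and one genuinely needs the stronger hypothesis that every adjoint connection preserves $g_L$. Once the identity $2g(\nabla_{S_1}S_2,S_3)=g([S_1,S_2],S_3)$ has been checked against all $S_3\in\Gamma(\mathcal{A})$, nondegeneracy of $g$ yields $\nabla_{S_1}S_2=\tfrac12[S_1,S_2]$.
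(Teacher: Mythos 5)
Your proof is correct. Note that the paper itself offers no proof of this statement --- it is quoted from the reference [FB] (Fasihi Ramandi--Boroojerdian) --- so there is no in-paper argument to compare against; your Koszul-formula derivation is a valid self-contained substitute. The reduction to showing $\rho(S_3)\cdot g(S_1,S_2)+g([S_1,S_3],S_2)+g([S_2,S_3],S_1)=0$ for all $S_3$ is right, the splitting $S_3=\lambda(\rho(S_3))+T_3$ correctly isolates the two inputs (the hypothesis $\nabla^{\lambda}_{Z}g_L=0$ kills the $\lambda(Z)$-part against $-\rho(S_3)\cdot g(S_1,S_2)$, and the ad-invariance identity $g([S_1,S_2],S_3)=g(S_1,[S_2,S_3])$ on $\Gamma(L)$, which the paper records as a consequence of invariance, kills the $T_3$-part), and nondegeneracy of $g$ finishes the argument. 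The only point worth flagging is that the existence of a splitting $\lambda$ presupposes that $\mathcal{A}$ is transitive; this is implicit in the theorem's hypothesis, since the paper's notion of invariant metric is defined via adjoint connections, which themselves require a splitting, so your use of $\lambda$ is consistent with the intended setting. You also correctly identify $\rho(S_3)\cdot g(S_1,S_2)$ as the term that genuinely requires invariance under the adjoint connections rather than mere ad-invariance on $L$.
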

\section{Compatible triples on Lie algebroids}
\subsection{Symplectic Lie algebroids}
\begin{definition}\cite{CI} An {\it{almost complex structure $J_\mathcal{A}$}} on $(\mathcal{A},\rho,[\,,])$ is an endomorphism $J_\mathcal{A}:\Gamma(\mathcal{A})\rightarrow \Gamma(\mathcal{A})$, over the identity, such that $J_\mathcal{A}^2=-{\rm id_{\Gamma(\mathcal{A})}}$. A Lie algebroid $(\mathcal{A},\rho,[\,,],J_\mathcal{A})$ endowed with such a structure is called an almost complex Lie algebroid.
\end{definition} 
\begin{definition}
We call an almost complex structure $J_\mathcal{A}$ on $\mathcal{A}$, {\it admissible} (or called admissible with respect to $J_M$) if there exists an almost complex structure $J_M$ on M such that
$$\rho \circ J_\mathcal{A}=J_M \circ \rho.$$
\end{definition}
\begin{remark}
\begin{enumerate}
\item We will use the notion of integrability of almost complex structures on Lie algebroids as Popescu developed in \cite{CI}.
\item If $J_\mathcal{A}$ is admissible with respect to $J_M$, and $N_{J_\mathcal{A}}$ and $N_{J_M}$ are Nijenhuis tensors of $(\mathcal{A},J_\mathcal{A})$ and $(M,J_M)$ respectively, we have
\begin{equation}\label{NNM}
\rho(N_{J_\mathcal{A}}(a,b))=N_{J_M}(\rho(a),\rho(b))
\end{equation}
for every $a,b$ in $\Gamma(\mathcal{A})$.(\cite{CI})
\end{enumerate}
\end{remark}
\begin{theorem}
An almost complex structure $J_\mathcal{A}$ on transitive Lie algebroid $\mathcal{A}$ is admissible if and only if $J_\mathcal{A}(L)\subseteq L$.
\end{theorem}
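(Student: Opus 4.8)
The plan is to prove the two implications separately. The forward direction will be a one-line consequence of the definitions, while the converse will require constructing $J_M$ explicitly, using the transitivity of $\mathcal{A}$.

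First I would dispatch the forward implication. Suppose $J_\mathcal{A}$ is admissible with respect to some almost complex structure $J_M$ on $M$, so that $\rho\circ J_\mathcal{A}=J_M\circ\rho$. If $S\in\Gamma(L)=\Gamma(\ker\rho)$, then $\rho(J_\mathcal{A}(S))=J_M(\rho(S))=J_M(0)=0$, whence $J_\mathcal{A}(S)\in\ker\rho=L$. Thus $J_\mathcal{A}(L)\subseteq L$, with nothing more needed.

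For the converse, assume $J_\mathcal{A}(L)\subseteq L$. Since $\mathcal{A}$ is transitive, Theorem~\ref{split} (or simply the surjectivity of $\rho$) provides a splitting $\lambda:TM\to\mathcal{A}$ with $\rho\circ\lambda=1_{TM}$. I would then \emph{define} $J_M:=\rho\circ J_\mathcal{A}\circ\lambda:TM\to TM$, which is smooth as a composition of bundle maps and linear by linearity of $\rho$ and $J_\mathcal{A}$. The crucial step is to check that $J_M$ satisfies the admissibility relation. Given $a\in\mathcal{A}_p$, write $a=\lambda(\rho(a))+\big(a-\lambda(\rho(a))\big)$; the second summand lies in $L$ because $\rho\big(a-\lambda(\rho(a))\big)=\rho(a)-\rho(a)=0$. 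Applying $J_\mathcal{A}$ and using $J_\mathcal{A}(L)\subseteq L$, the image of the second summand stays in $L=\ker\rho$, so that
\[
(J_M\circ\rho)(a)=\rho\big(J_\mathcal{A}(\lambda(\rho(a)))\big)=\rho\big(J_\mathcal{A}(a)\big),
\]
which is exactly the required identity $\rho\circ J_\mathcal{A}=J_M\circ\rho$. In particular this shows simultaneously that $J_M$ is independent of the chosen splitting.

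It then remains to verify that $J_M$ is genuinely an almost complex structure, i.e. $J_M^2=-1_{TM}$. For $X\in T_pM$, pick $a\in\mathcal{A}_p$ with $\rho(a)=X$; applying the relation just established twice gives
\[
J_M^2(X)=J_M\big(\rho(J_\mathcal{A}(a))\big)=\rho\big(J_\mathcal{A}^2(a)\big)=\rho(-a)=-X.
\]
I expect the only genuinely delicate point to be the well-definedness of $J_M$ as a map $TM\to TM$: this is precisely where the hypothesis $J_\mathcal{A}(L)\subseteq L$ is used, ensuring that the ambiguity in choosing a preimage of $X$ under $\rho$ (an element of $L$) is annihilated by $\rho\circ J_\mathcal{A}$. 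Everything else is routine bookkeeping with the linearity of $\rho$, $\lambda$ and $J_\mathcal{A}$.
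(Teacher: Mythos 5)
Your proof is correct and follows essentially the same route as the paper: both directions hinge on the construction $J_M:=\rho\circ J_\mathcal{A}\circ\lambda$ for a splitting $\lambda$ of the transitive anchor. The only (mild) difference is organizational --- you establish the intertwining relation $J_M\circ\rho=\rho\circ J_\mathcal{A}$ first (making explicit where $J_\mathcal{A}(L)\subseteq L$ enters and why $J_M$ is independent of $\lambda$) and then deduce $J_M^2=-1_{TM}$ from it, whereas the paper computes $J_M^2=-1_{TM}$ directly via the $E$/$L$ decomposition and treats the intertwining and the other implication as obvious; your ordering is arguably cleaner but not a different argument.
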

\begin{proof}
Let  $\lambda : TM\longrightarrow \mathcal{A}$ be a splitting of $\rho$ and $E:=\lambda(TM)$, i.e., $\mathcal{A}=E+L$. For $X\in TM$ put
$${J_M}(X):=\rho\big(J_\mathcal{A}(\lambda(X))\big).$$
We show that $J_M$ is an almost complex structure on $M$. To prove this we need to show that $J_M^2=-id_{TM}$. For $X \in \mathcal{X}(M)$
\begin{equation}\label{A1}
J_M^2(X)=\rho\Big(J_\mathcal{A}\big(\lambda(\rho\big(J_\mathcal{A}(\lambda(X))\big))\big)\Big)=\rho\Big(J_\mathcal{A}\big(\big(J_\mathcal{A}(\lambda(X))\big)^E\big)\Big)
\end{equation}
where $\big(J_\mathcal{A}(\lambda(X))\big)^E$ is the $E$-part of $J_\mathcal{A}(\lambda(X))$ with respect to the given decomposition of $\mathcal{A}$. On the other hand,
$J_\mathcal{A}(\lambda(X))=J_\mathcal{A}(\lambda(X))^E+J_\mathcal{A}(\lambda(X))^L$. Thus by applying $J_\mathcal{A}$ we have $J_\mathcal{A}(J_\mathcal{A}(\lambda(X))^E)=-X-J_\mathcal{A}(J_\mathcal{A}(\lambda(X))^L)$. Again by applying $\rho$ we have $\rho(J_\mathcal{A}(J_\mathcal{A}(\lambda(X))^E))=-\rho(\lambda(X))=-X$.
So by ($\ref{A1}$), $J_M^2=-Id_{TM}$, i.e.,
$J_M$ is an almost complex structure on $M$ that clearly satisfies $J_M \circ \rho=\rho \circ J_\mathcal{A}$.\\
The converse is obvious.
\end{proof}
\begin{example}
Let $(M,J_M,g_M)$ be an almost Hermitian manifold with the Levi- Civita connection $\nabla$. We denote the induced connection on $L(TM)$ again by $\nabla$. This connection preserves Lie bracket of $L(TM)$. We also denote the curvature of $\nabla$ on $L(TM)$ by $R^\prime$. So for $T \in L(TM)$, we have $R^\prime (X,Y)T=[R(X,Y),T]$. Consequently, we can define  an algebroid structure on $TM+L(TM)$ by the following Lie bracket on $\Gamma(TM+L(TM))$.(\cite{FB})\\
For every $X,Y\in\mathcal{X}(M)$ and $T,S$ in $\Gamma(L(TM))$
$$[\![X+T,Y+S]\!]=[X,Y]+\nabla_XS-\nabla_YT+[T,S]+R(X,Y).$$
Now we can define an almost complex structure on $\Gamma(L(TM))$, induced by the almost complex structure $J_M$ on $M$, as
$$\begin{array}{ll}
 J_{L(TM)} :& \Gamma(L(TM))\longrightarrow\Gamma(L(TM))\\ \\
&J_{L(TM)}(T)(X):=T(J_M(X)) \ \  \ \  \ \  (X \in \mathcal{X}(M)).\\
\end{array}$$
Using these two almost complex structures, we define an almost complex structure on $TM+L(TM))$ as $J(X+T)=J_M(X)+J_{T(M)}(T)$ for $X\in\mathcal{X}(M)$ and $T\in\Gamma(L(TM))$. Clearly $J$ is admissible with respect to $J_M$.
It is easy to see that $g_{L(TM)}(T,S):=trace(TS^T)$ is a Riemannian metric on $L(TM)$, compatible with $J_{L(TM)}$. Thus $(TM+L(TM),J,g)$ is an almost Hermitian Lie algebroid where
$$g(X+T,Y+S)=g_M(X,Y)+g_{L(TM)}(T,S).$$
Moreover, $(M,J_M,g_M)$ is a Hermitian manifold if and only if $(TM+L(TM),J,g)$ is a Hermitian Lie algebroid. In fact, if $(M,J_M,g_M)$ is Hermitian, then the Nijnhuis tensor of $J$ can be calculated in the following three conditions:
\begin{enumerate}
\item If $T\in\Gamma(L(TM))$ and $X\in\mathcal{X}(M)$ then
$$\begin{array}{lll}\vspace{0.2cm}
N(T,X)(Y)&=&([J(T),J(X)]-[T,X]-J[J(T),X]-J[T,J(X)])(Y)\\ \vspace{0.2cm}
&=&(\nabla_{J(X)}J(T) - \nabla_XT - J(\nabla_XJ(T)) - J(\nabla_{J(X)}T))(Y)\\ \vspace{0.2cm}
&=&\nabla_{J_M(X)}T(J_M(Y))-T(J_M(\nabla_{J_M(X)}Y))-\nabla_XT(Y)\\ \vspace{0.2cm}
&&+T(\nabla_XY)+\nabla_XT(Y)+T(J_M(\nabla_XJ_M(Y)))\\ \vspace{0.2cm}
&&-\nabla_{J_M(X)}T(J_M(Y))+T(\nabla_{J_M(X)}J_M(Y))\\ \vspace{0.2cm}
&=&T\big(J_M(\nabla_XJ_M)(Y)\big)+T((\nabla_{J_M(X)}J_M)(Y))\\
&=&0.
\end{array}$$
\item If $T,S\in\Gamma(L(TM))$ then
$$\begin{array}{lcl}\vspace{0.2cm}
N(T,S)(X)&=&([J(T),J(S)]-[T,S]-J[J(T),S]-J[T,J(S)])(X)\\ \vspace{0.2cm}
&=&T(J_M(S(J_M(X))))-S(J_M(T(J_M(X))))-T(S(X))\\ \vspace{0.2cm}
&&+S(T(X))-T(J_M(S(J_M(X))))-S(T(X))\\ \vspace{0.2cm}
&&+T(S(X))+S(J_M(T(J_M(X))))\\
&=&0.
\end{array}$$
\item
If $X,Y\in\mathcal{X}(M)$ then
\begin{equation}\label{NXY}
\begin{array}{lll}\vspace{0.2cm}
N(X,Y)&=&[J_M(X),J_M(Y)]-[X,Y]-J_M[J_M(X),Y]\\\vspace{0.2cm}
&&-J_M[X,J_M(Y)]+R(J_M(X),J_M(Y))-R(X,Y)\\
&&-J_M(R(X,J_M(Y)))-J_M(R(J_M(X),Y)).
\end{array}
\end{equation}
\end{enumerate}
The first line in the above equation is clearly zero. So we need to show that the second line is zero.\\
Integrability of $J_M$ leads to the following calculation 
$$\begin{array}{lcl}\vspace{0.2cm}
 J_M(R(X,Y)Z)&=&J_M(\nabla_X\nabla_YZ)-J_M(\nabla_Y\nabla_XZ)-J_M(\nabla_{[X,Y]}Z)\\ \vspace{0.2cm}
&=&\nabla_X\nabla_YJ_M(Z)-\nabla_Y\nabla_XJ_M(Z)-\nabla_{[X,Y]}J_M(Z)\\
&=&R(X,Y)J_M(Z)
\end{array}$$
For $X,Y,Z\in\mathcal{X}(M)$.
Using the property of curvature tensor $R$  for $U,V\in\mathcal{X}(M)$ we have
\begin{align*}
 g_M(R(X,J_M(Y))U,V)=&g_M(R(U,V)X,J_M(Y)\\
 =&-g_M(R(U,V)J_M(X),Y)\\
 =&-g_M(R(J_M(X),Y)U,V).
\end{align*}
One can easily show that $R(X,J(Y))=-R(J(X),Y)$. By ($\ref{NXY}$) we see that  $N(X,Y)=0$ for every $X,Y$ in $\mathcal{X}(M)$, i.e., $(TM+L(TM),J,g)$ is Hermitian.\\
Conversely if  $(TM+L(TM),J,g)$ is a Hermitian Lie algebroid then by equation (\ref{NXY}) we have $N_{J_M}=0$ and so $(M,J_M,g_M)$ is a Hermitian manifold.
\end{example}
\begin{definition}\cite{RN}
A {\it{symplectic Lie algebroid}} is a Lie algebroid $(\mathcal{A},\rho,[,])$ together with a closed and non-degenerate 2-form $\omega$ on $\mathcal{A}$.
\end{definition}
\begin{remark}
For a symplectic Lie algebroid $(\mathcal{A},\omega)$ and every smooth function $f$ on $M$ there exists a unique  section $a_f\in\Gamma(\mathcal{A})$ such that
$$\mathrm{d}_\mathcal{A}f(b)=\omega(a_f,b) \ \ \ \ \ \ \ \ (b\in \Gamma(\mathcal{A})).$$
This is called the {\it{Hamiltonian section}} of $f$. Using this, one can define a Poisson structure on $M$ as follows(\cite{RN}):
$$\{f,g\}:=\omega(a_f,a_g).$$
\end{remark}
There is no analogue to the Lie's third theorem in the case of Lie algebroids, since not every Lie algebroid can be integrated to a global Lie groupoid, although there are local versions of this result, (see \cite{MC}).
\begin{theorem}
For a symplectic Lie algebroid $(\mathcal{A},\rho,[\,,],\omega)$ if $\omega|_L$ is nondegenerate, then $\mathcal{A}$ is integrable.
\end{theorem}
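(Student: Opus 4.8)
The plan is to convert the nondegeneracy of $\omega|_L$ into a flatness statement and then read off integrability from the resulting flat transitive structure. Throughout I work in the transitive situation; if $\rho$ is only regular, one first restricts to a leaf $N$ of the characteristic foliation, on which $\rho|_{\mathcal{A}_N}$ is transitive, as recalled above. The first move is to use $\omega|_L$ to split $\mathcal{A}$: since $\omega|_L$ is nondegenerate, $L$ is a symplectic subbundle, so its $\omega$-orthogonal complement $E:=L^{\perp_\omega}$ satisfies $\mathcal{A}=E\oplus L$; because $L=\ker\rho$ and $L\cap E=0$ (as $\omega|_L$ is nondegenerate) and the ranks match, the anchor restricts to an isomorphism $\rho|_E:E\to TM$. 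Setting $\lambda:=(\rho|_E)^{-1}$ gives a splitting of $\rho$ as in Theorem~\ref{split}, and by the very definition of $E$ one has $\omega(E,L)=0$, so $\omega$ is block-diagonal with respect to $\mathcal{A}=E\oplus L$.

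The key step is to show that the curvature of this distinguished splitting vanishes. Write $c(X,Y):=[\lambda X,\lambda Y]-\lambda[X,Y]\in\Gamma(L)$ for the curvature term of $\lambda$ (so $c=2\Omega^\lambda$ in the notation preceding Theorem~\ref{split}). I would evaluate the closedness condition $d^\mathcal{A}\omega=0$ on the mixed triple $\lambda X,\lambda Y,S$ with $X,Y\in\mathcal{X}(M)$ and $S\in\Gamma(L)$. All three anchor terms drop out, because $\rho(S)=0$ and because $\omega(\lambda X,S)=\omega(\lambda Y,S)=0$ by block-diagonality; and in the bracket terms $[\lambda X,S]=\nabla^\lambda_XS$ and $[\lambda Y,S]=\nabla^\lambda_YS$ lie in $\Gamma(L)$ and hence pair trivially with $E$. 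What survives is exactly $-\omega(c(X,Y),S)$, so closedness yields $\omega(c(X,Y),S)=0$ for every $S\in\Gamma(L)$, and nondegeneracy of $\omega|_L$ forces $c\equiv 0$. Thus $\lambda$ is flat, and the adjoint connection $\nabla^\lambda_XT=[\lambda X,T]$ is a flat connection on the Lie algebra bundle $L$ that preserves the fibrewise bracket (bracket-preservation is just the Jacobi identity, and flatness holds since the curvature of $\nabla^\lambda$ is the inner derivation $[c(X,Y),-]=0$).

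Finally I would conclude integrability from flatness. With $c\equiv0$ the algebroid is the \emph{flat} transitive Lie algebroid $TM\oplus L$ with bracket $[\![X+S,Y+T]\!]=[X,Y]+\nabla^\lambda_XT-\nabla^\lambda_YS+[S,T]$, so every monodromy group $\mathcal{N}_x\su Z(L_x)$ --- obtained by integrating the curvature over $2$-spheres --- is trivial, and by the Crainic--Fernandes criterion (\cite{MC}) $\mathcal{A}$ is integrable. Equivalently, and more explicitly, the holonomy of the flat connection $\nabla^\lambda$ gives a representation $\pi_1(M,x_0)\to\mathrm{Aut}(L_{x_0})$ which lifts to $\mathrm{Aut}(G)$, where $G$ is the simply connected group with Lie algebra $L_{x_0}$; the quotient of $\widetilde M\times\widetilde M\times G$ by the diagonal $\pi_1$-action is then a Lie groupoid integrating $\mathcal{A}$.

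I expect the curvature-vanishing computation to be the main obstacle, and specifically the two features that make it succeed: arranging the splitting to be $\omega$-orthogonal so that the mixed terms of $d^\mathcal{A}\omega$ disappear, and then upgrading ``$\omega(c,-)$ vanishes on $L$'' to ``$c=0$'' via the nondegeneracy of $\omega|_L$. It is worth stressing that one cannot instead hope the isotropy has trivial centre --- abelian symplectic Lie algebras are permitted as fibres of $L$ --- so the symplectic hypothesis genuinely has to enter through the curvature, exactly as above. Once $c\equiv0$, the integrability is a routine consequence of the flat structure, so the entire content of the theorem is concentrated in the flatness step.
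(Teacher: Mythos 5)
Your proposal is correct and follows essentially the same route as the paper: take $E=L^{\omega}$ (after restricting to a leaf of the characteristic foliation where the algebroid is transitive), use $d^{\mathcal{A}}\omega=0$ evaluated on two sections of $E$ and one of $L$ to show the curvature of the induced splitting pairs trivially with $L$ and hence vanishes by nondegeneracy of $\omega|_L$, and then invoke the Crainic--Fernandes integrability criterion. Your write-up is somewhat more detailed (checking that $\rho|_E$ is an isomorphism, block-diagonality of $\omega$, and the explicit flat-groupoid construction), but the substance is identical to the paper's argument.
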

\begin{proof}
Let $\mathcal{N}$ be a leaf of the characteristic foliation of $\mathcal{A}$ then $(\mathcal{A}_\mathcal{N},\rho_\mathcal{N},[\,,]_\mathcal{N},\omega_\mathcal{N})$ is a symplectic Lie algebroid over $\mathcal{N}$. Suppose that $E$ is the symplectic complement of $L_\mathcal{N}$, i.e., $E=L^{\omega}_\mathcal{N}$. Since $\omega|_L$ is nondegenerate,  $L\cap L^{\omega}={0}$. So $\mathcal{A}_\mathcal{N}=E+L_\mathcal{N}$ is a decomposition of $\mathcal{A}_\mathcal{N}$. We claim that $E$ is closed under the bracket. Considering $\Omega$ as the curvature 2-form with respect to the given decomposition of $\mathcal{A}_\mathcal{N}$, we have
$$\begin{array}{cll}
0&=&\mathrm{d}_{\mathcal{A}_\mathcal{N}}\omega_\mathcal{N}(a,b,s)=\rho_\mathcal{N}(a).\omega_\mathcal{N}(b,s)+\rho_\mathcal{N}(b).\omega_\mathcal{N}(a,s)-\rho_\mathcal{N}(s).\omega_\mathcal{N}(a,b)\\ \\
&&+\omega_\mathcal{N}([a,s]_\mathcal{N},b)-\omega_\mathcal{N}([a,b]_\mathcal{N},s)-\omega_\mathcal{N}([b,s]_\mathcal{N},a])\\ \\
&=&\omega_\mathcal{N} (\Omega(a,b),s),\\
\end{array}$$
for all $a,b \in  \Gamma E$ and $s \in \Gamma L_\mathcal{N}$.\\
This means that $\Omega =0$. Thus $\Gamma E$ is closed under the bracket.
Hence by Corollary 5.2, of \cite{MC}, $\mathcal{A}$ is integrable.
\end{proof}
\begin{example}
 Let $(\mathfrak{g},\omega)$ be a symplectic Lie algebra, i.e., $\mathfrak{g}$ is a Lie algebra and $\omega$ is a non-degenerate 2- form on it and
 $$\omega([S_1,S_3],S_2)-\omega([S_1,S_2],S_3)-\omega([S_2,S_3],S_1)=0 \ \ \  \ \ \ (S_1,S_2,S_3 \in \mathfrak{g}),$$
 then $(M\times\mathfrak{g},0,[\,,]_{\mathfrak{g}},\omega)$ is a symplectic Lie algebroid.
 \end{example}
\begin{example} Let $(L,\omega_L)$ is a symplectic Lie algebra bundle over a symplectic manifold $(M,\omega_M)$. Consider the flat connection $\nabla$ on $L$ that preserves $\omega_L$. Then $(L,\nabla)$ together with the zero curvature 2-form construct a transitive Lie algebroid, $TM + L$. Put
\begin{align*}
\omega: \ \ &\Gamma(TM+ L)\times\Gamma(TM+ L)\rightarrow C^\infty(TM+ L)\\
&\omega(X+S,Y+T)=\omega_M(X,Y)+\omega_L(S,T)
\end{align*}
clearly $\omega$ is nondegenerate. Moreover, $\omega$ is closed, since  for every $X\in \mathcal{X}(M)$ and $S,T \in \Gamma(L)$ we have
$$\begin{array}{lcl}\vspace{0.2cm}
\mathrm{d}^{(TM+L)}\omega(X,S,T)&=&\rho(X).\omega(S,T)-\rho(S).\omega(X,T)+\rho(T).\omega(X,T)\\ \vspace{0.2cm}
&&-\omega([X,S],T)+\omega([X,T],S)-\omega([T,S],X)\\\vspace{0.2cm}
&=&\rho(X).\omega_L(S,T)-\omega_L(\triangledown_XS,T)-\omega_L(S,\triangledown_XT)\\\vspace{0.2cm}
&=&(\triangledown_X\omega_L)(S,T)\\
&=&0.\\
\end{array}$$
Other cases $\big($i.e. $d\omega(X,Y,Z)=0$ and $d\omega(X,S,T)=0$ and $d\omega(S,T_1,T_2)=0\big)$ are trivial.
Thus $(TM+L,\omega)$ is a symplectic Lie algebroid over $M$.
\end{example}
\begin{definition}
Over a symplectic Lie algebroid $(\mathcal{A},\omega)$, the triple $(\omega,J,g)$ is called compatible if $J$ is an almost complex structure and $g$ is a Riemannian metric on $\mathcal{A}$ such that
$$g(J(S),J(T))=g(S,T) \  \ and \ \omega(S,T)=g(S,J(T))\ \  \ \ \ \ \ \ (S,T \in \Gamma(\mathcal{A})).$$
\end{definition}
For such a triple, we also have(\cite{CI})
\begin{equation}\label{nablaN}
2g((\nabla_aJ)b,c)=g(N(b,c),J(a)),
\end{equation}
where $\nabla$ is the Levi-Civita connection on $\mathcal{A}$ and $N$ is the Nijenhuis tensor of $J$.
\subsection{Decomposition of Lie algebroids with compatible triple}
Suppose that $(\omega,J,g)$ be a compatible triple on a Lie algebroid $\mathcal{A}$ over $M$. For every $p\in M$ put $L^1_p=L_p\cap L_p^\omega$ and $L^2_p=(L^1_p)^\perp$ under the $g_L$ restriction of $g$ on $L$. Note that $\omega|_{L^2_p}$ is nondegenerate. Putting $E^1_p=J(L^1_p)$, for $J(T_p)\in E^1_p$ and $S_p\in L_p$ we have $$g(S_p,J(T_p))=\omega(S_p,T_p)=0.$$
Thus $E^1_p$ is perpendicular to $L_p$. Finally taking $E^2_p=(L_p+E^1_p)^\omega$ one can see that $E^2_p\cap( L_p+E^1_p)=0$, since for $a_p\in E^2_p\cap( L_p+E^1_p)$ there is $S_p\in L_p$ and $T_p\in L^1_p$ such that $a_p=S_p+J(T_p)$. Clearly
$$0=\omega(a_p,T_p)=\omega(S_p+J(T_p),T_p)=g(T_p,T_p).$$
Thus $T_p=0$, and so  $a_p=S_p$. Since $a_p\in L_p^\omega$, we have $S_p\in L_p^1$. On the other hand, $S_p \in (E^1_p)^\omega$, hence
 $$0=\omega(J(S_p),S_p)=g(S_p,S_p),$$
thus $a_p=0$. Therefore, $(E^1_p+E^2_p)\oplus L_p$ is a decomposition of $\mathcal{A}_p$, i.e., $\mathcal{A}_p=(E^1_p+E^2_p) \oplus (L^1_p+L^2_p)$. Note that these sets (including $E^1,E^2,L^1,L^2$) may not be subbundles or distributions, and they may have not constant rank. However, under certain circumstances, some combinations of these sets are subbundles. For instance, if $\mathcal{A}$ is transitive then $E^2+L^1$ is a sub Lie algebroid.\\
The restriction of $[\,,]$ to $L^1_p$ is zero. In fact $\omega$ is closed so, for every $S_p,T_p\in L^1_p$ and $Z_p\in L_p$ we have
$$0=d\omega(S_p,T_p,Z_p)=-\omega([S_p,T_p],Z_p).$$
Therefore, $[S_p,T_p]\in L_p^1$.  Again for $Z_p\in L_p^1$ we have
$$0=d\omega(S_p,T_p,J(Z_p))=-\omega([S_p,T_p],J(Z_p))=g([S_p,T_p],Z_p)$$
thus $[S_p,T_p]\in L_p^2$. Therefore, $[S_p,T_p]=0$.\\
Moreover, $\rho(E^2+L^1)$ is an integrable generalized distribution on $M$. If $\mathcal{N}$ is a leaf of $\mathcal{A}$ then $(E^2+L^1)_{|_\mathcal{N}}$ is a Lie sub algebroid of $\mathcal{A}_{|_\mathcal{N}}\rightarrow \mathcal{N}$, because for every $a,b\in\Gamma(E^2+L^1)$ and $S\in\Gamma(\mathcal{A})$ such that $S_p\in L$, we have
$$0=d\omega(a,b,S)(p)=-\omega([a,b],S)(p),$$
which means that
\begin{equation}\label{[XY]}
[a,b]\in \Gamma(E^2+L^1).
\end{equation}
The above equation shows that $E^2+L^1$ is closed under the bracket. Now we put $\Lambda:=\rho(E^2+L^1)$ and call it the {\it symplectic generalized distribution} of $(\mathcal{A},\omega)$ on $M$. For $x\in M $ if $\mathcal{O}_x$ is the integral submanifold of $\Lambda$ at $x$, we can define $\omega_{\mathcal{O}_x}$ as a nondegenerate 2-form on $\mathcal{O}_x$ by 
\begin{align*}
\omega_{\mathcal{O}_x}: \ \ \mathcal{X}(\mathcal{O}_x)\times \mathcal{X}(\mathcal{O}_x)&\longrightarrow C^\infty(\mathcal{O}_x)\\
\omega_{\mathcal{O}_x}(X,Y)(p)&:=\omega(a,b)(p),
\end{align*}
where $a,b\in \Gamma(E^2+L^1)$ satisfy $\rho(a_p)=X_p,\rho(b_p)=Y_p$ for every $p$ in $\mathcal{O}_x$.
Clearly $\omega_{\mathcal{O}_x}$ is a 2-form on $\mathcal{O}_x$. Since $\omega_{|_{E^2}}$ is nondegenerate, so is  $\omega_{\mathcal{O}_x}$. Moreover, for $p\in \mathcal{O}_x$ and $X,Y,Z \in \mathcal{X}(\mathcal{O}_x)$, there exist $a,b,c$ in $\Gamma\mathcal{A}$ such that $a_p,b_p, c_p\in E^2_p$ and $\rho(a_p)=X_p$, $\rho(b_p)=Y_p$ and $\rho(c_p)=Z_p$. Therefore,
\begin{align*}
d\omega_{\mathcal{O}_x}(X,Y,Z)(p)=&X.\omega_{\mathcal{O}_x}(Y,Z)(p)-Y.\omega_{\mathcal{O}_x}(X,Z)(p)-Z.\omega_{\mathcal{O}_x}(X,Y)(p)\\
&-\omega_{\mathcal{O}_x}([X,Y],Z)(p)+\omega_{\mathcal{O}_x}([X,Z],Y)(p)-\omega_{\mathcal{O}_x}([Y,Z],X)(p)\\
=&\rho(a).\omega(b,c)(p)-\rho(b).\omega(a,c)(p)-\rho(c).\omega(a,b)(p)\\
&-\omega\big([a,b]_p-[a,b]^{L^1}_p,c_p\big)+\omega\big([a,c]_p-[a,c]_p^{L^1},b_p\big)\\\
&-\omega\big([b,c]_p-[b,c]_p^{L^1},a_p\big)\\
=&d^\mathcal{A}\omega(a,b,c)(p)+\omega\big([a,b]_p^{L^1},c_p\big)-\omega\big([a,c]_p^{L^1},b_p\big)\\
&+\omega\big([b,c]_p^{L^1},a_p\big)\\
=&0,
\end{align*}
which means that $\omega_{\mathcal{O}_x}$ is closed and so is a symplectic form on $\mathcal{O}_x$.
\begin{theorem}
 Let $(\omega,J,g)$ be a compatible triple on Lie algebroid $\mathcal{A}$ and $\Lambda$ be the symplectic generalized distribution of $(\mathcal{A},\omega)$ on $M$. Then for every integral submanifold $\mathcal{O}$ of $\Lambda$ we have
 $$\{f,g\}_{|_\mathcal{O}}=\{f_{|_\mathcal{O}},g_{|_\mathcal{O}}\}_{\mathcal{O}},$$
 where $\{\,,\}$ is the Poisson structure induced by $\omega$ and $\{\,,\}_\mathcal{O}$ is the Poisson structure induced by $\omega_\mathcal{O}$ on $\mathcal{O}$.
 \end{theorem}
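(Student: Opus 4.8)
The plan is to show that the Hamiltonian section $a_f$ of $f$ automatically takes values in $E^2+L^1$, and that its anchor image restricts on each integral submanifold $\mathcal{O}$ to the Hamiltonian vector field of $f_{|_\mathcal{O}}$ with respect to $\omega_\mathcal{O}$. Once this identification is in hand, the equality of the two brackets follows immediately from the very definition of $\omega_\mathcal{O}$.

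First I would locate $a_f$ inside the decomposition. For every $b\in\Gamma(L)$ we have $\rho(b)=0$, so
$$\omega(a_f,b)=\mathrm{d}_\mathcal{A}f(b)=\rho(b)\cdot f=0,$$
which says precisely that $a_f\in L^\omega$ at every point. A dimension count identifies this space: from $\mathcal{A}_p=(E^1_p+E^2_p)\oplus(L^1_p+L^2_p)$ and $\dim E^1_p=\dim L^1_p$ one gets $\dim L_p^\omega=\dim\mathcal{A}_p-\dim L_p=\dim E^2_p+\dim L^1_p$, while the inclusions $E^2_p=(L_p+E^1_p)^\omega\subseteq L_p^\omega$ and $L^1_p=L_p\cap L_p^\omega\subseteq L_p^\omega$ give $E^2_p\oplus L^1_p\subseteq L_p^\omega$; hence $L_p^\omega=E^2_p\oplus L^1_p$. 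Therefore $a_f$ lies pointwise in $E^2+L^1$, so over $\mathcal{O}$ it is an admissible representative section in the (choice-independent) definition of $\omega_\mathcal{O}$, and its anchor image $\rho(a_f)$ lies in $\Lambda$ and is thus tangent to $\mathcal{O}$.

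Next I would identify $\rho(a_f)_{|_\mathcal{O}}$ with the Hamiltonian vector field $X_{f_{|_\mathcal{O}}}$. Fix $p\in\mathcal{O}$ and $Y\in\mathcal{X}(\mathcal{O})$, and choose $b\in\Gamma(E^2+L^1)$ with $\rho(b_p)=Y_p$. Using $a_f$ itself as the representative section for $\rho(a_f)$, the definition of $\omega_\mathcal{O}$ gives
$$\omega_\mathcal{O}\big(\rho(a_f),Y\big)(p)=\omega(a_f,b)(p)=\rho(b_p)\cdot f=Y_p\cdot f_{|_\mathcal{O}},$$
where the last equality uses that $Y_p$ is tangent to $\mathcal{O}$. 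Since $\omega_\mathcal{O}$ is nondegenerate, this characterizes $\rho(a_f)_{|_\mathcal{O}}$ as the unique Hamiltonian vector field $X_{f_{|_\mathcal{O}}}$ of $f_{|_\mathcal{O}}$ on $(\mathcal{O},\omega_\mathcal{O})$, and likewise $\rho(a_g)_{|_\mathcal{O}}=X_{g_{|_\mathcal{O}}}$.

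Finally, for $p\in\mathcal{O}$, combining these facts with the definition of $\omega_\mathcal{O}$ (legitimate since $a_f,a_g\in\Gamma(E^2+L^1)$) yields
$$\{f,g\}_{|_\mathcal{O}}(p)=\omega(a_f,a_g)(p)=\omega_\mathcal{O}\big(\rho(a_f),\rho(a_g)\big)(p)=\omega_\mathcal{O}\big(X_{f_{|_\mathcal{O}}},X_{g_{|_\mathcal{O}}}\big)(p)=\{f_{|_\mathcal{O}},g_{|_\mathcal{O}}\}_\mathcal{O}(p),$$
which is the assertion. I expect the main obstacle to be the bookkeeping around $E^2+L^1$: because these classes need not be subbundles globally, I must argue carefully that $a_f$ restricts to a genuine section over $\mathcal{O}$ and so is an admissible representative in the definition of $\omega_\mathcal{O}$, and I must keep the Hamiltonian conventions for $\omega$ and $\omega_\mathcal{O}$ aligned so that no sign discrepancy enters the final chain of equalities.
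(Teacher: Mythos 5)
Your proposal is correct and follows essentially the same route as the paper: both reduce the claim to the identity $\rho(a_f(x))=X_{f_{|_\mathcal{O}}}(x)$ and then compare $\omega$ with $\omega_\mathcal{O}$ on the Hamiltonian representatives. The only difference is that the paper asserts this identity without justification, whereas you supply the missing details (that $a_f$ lies pointwise in $L^\omega=E^2\oplus L^1$ and the nondegeneracy argument identifying $\rho(a_f)_{|_\mathcal{O}}$ with $X_{f_{|_\mathcal{O}}}$), which is a welcome strengthening rather than a different approach.
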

 \begin{proof}
For $f,g\in C^\infty(M)$ let $a_f$ and $a_g$ be the Hamiltonian sections of $f,g$ with respect to $\omega$ and $X_{f_{|_\mathcal{O}}},X_{g_{|_\mathcal{O}}}$ be the Hamiltonian vector fields of $f_{|_\mathcal{O}}$ and $g_{|_\mathcal{O}}$ with respect to $\omega_{\mathcal{O}}$. Then
$$\rho(a_f(x))=X_{f_{|_\mathcal{O}}}(x)\,, \ \ \ \ \rho(a_g(x))=X_{g_{|_\mathcal{O}}}(x) \ \ \ \ \ \ (x\in\mathcal{O}).$$
Thus for every $x$ in $\mathcal{O}$ we have
$$\{f_{|_\mathcal{O}},g_{|_\mathcal{O}}\}_{\mathcal{O}}(x)=\omega_\mathcal{O}\big(X_{f_{|_\mathcal{O}}}(x),X_{g_{|_\mathcal{O}}}(x)\big)=
\omega\big(a_f(x),a_g(x)\big)=\{f,g\}_{\omega_{|_\mathcal{O}}}(x),$$
which completes the proof.
\end{proof}
\begin{corollary}
With the above notations, if $f$ is constant on $\mathcal{O}_x$ for some $x\in M$, then
$$\{f,g\}(y)=0,$$
for every $g\in C^\infty(M)$ and $y\in \mathcal{O}_x$.
\end{corollary}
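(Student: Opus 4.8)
The plan is to reduce the claim directly to the Theorem above, which identifies the ambient bracket restricted to an integral submanifold with the intrinsic symplectic bracket on that submanifold. First I would fix $\mathcal{O}=\mathcal{O}_x$, the integral submanifold of $\Lambda$ through $x$, and apply the Theorem to the pair $f,g$, yielding
$$\{f,g\}_{|_\mathcal{O}}=\{f_{|_\mathcal{O}},g_{|_\mathcal{O}}\}_{\mathcal{O}}.$$
It therefore suffices to prove that the right-hand side vanishes at every point of $\mathcal{O}$.

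The key point is that a function which is constant along $\mathcal{O}$ is a Casimir for $\{\,,\}_{\mathcal{O}}$. By hypothesis $f$ is constant on $\mathcal{O}_x$, so its restriction $f_{|_\mathcal{O}}$ is constant and hence $\mathrm{d}(f_{|_\mathcal{O}})=0$. The Hamiltonian vector field $X_{f_{|_\mathcal{O}}}$ is determined by the relation $\mathrm{d}(f_{|_\mathcal{O}})(Y)=\omega_{\mathcal{O}}\big(X_{f_{|_\mathcal{O}}},Y\big)$ for all $Y\in\mathcal{X}(\mathcal{O})$, and since $\omega_{\mathcal{O}}$ is nondegenerate, the vanishing of $\mathrm{d}(f_{|_\mathcal{O}})$ forces $X_{f_{|_\mathcal{O}}}=0$ on $\mathcal{O}$. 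Consequently
$$\{f_{|_\mathcal{O}},g_{|_\mathcal{O}}\}_{\mathcal{O}}=\omega_{\mathcal{O}}\big(X_{f_{|_\mathcal{O}}},X_{g_{|_\mathcal{O}}}\big)=0$$
at every point of $\mathcal{O}$.

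Combining the two displays gives $\{f,g\}_{|_\mathcal{O}}=0$, that is, $\{f,g\}(y)=0$ for every $y\in\mathcal{O}_x$; since the argument uses nothing about $g$ beyond smoothness, this holds for all $g\in C^\infty(M)$ at once. I do not expect any real obstacle: the entire content is the application of the Theorem together with the elementary fact that a constant function has trivial Hamiltonian vector field. The only step meriting a moment's attention is that last fact, which relies on the nondegeneracy of $\omega_{\mathcal{O}}$ recorded just before the Theorem.
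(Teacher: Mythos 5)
Your proof is correct and is exactly the intended argument: the paper states this corollary without proof as an immediate consequence of the preceding theorem, and your reduction via the identity $\{f,g\}_{|_\mathcal{O}}=\{f_{|_\mathcal{O}},g_{|_\mathcal{O}}\}_{\mathcal{O}}$ together with the observation that a constant function has vanishing Hamiltonian vector field (by nondegeneracy of $\omega_{\mathcal{O}}$, established just before the theorem) is precisely what is needed.
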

Suppose that $(\omega,J,g)$ be a compatible triple on Lie algebroid $\mathcal{A}$ such that $J$ preserves $L^2$. Then for any $a_p\in E_p^2$ and $S_p\in L_p^2$ we have
$$0=\omega(J(S_p),a_p)=g(S_p,a_p),$$
which means that $E^2_p$ is perpendicular to $L_p^2$. Our claim is that $E^2$ is invariant under $J$. To prove this, for $a$ in $E^2_p$ consider the equation
$$J(a)=\big( J(a)\big)^{E^2_p}+\big(J(a)\big)^{L_p^2},$$
where$E^2_p$ is perpendicular to $L_p^1+E_p^1$ and $J$ is compatible with $g$. Thus $J(a)$ has no component through $L_p^1$ and $E_p^1$. Therefore,
$$\begin{array}{lcl}\vspace{0.2cm}
 0&=&\omega\Big(\big(J(a)\big)^{L^2_p},a\Big)\\ \vspace{0.2cm}
&=&\omega\Big(J\big(\big(J(a)\big)^{L^2_p}\big),J(a)\Big)\\ \vspace{0.2cm}
&=&\omega\Big(J\big(\big(J(a)\big)^{L^2_p}\big),\big( J(a)\big)^{E^2_p}+\big(J(a)\big)^{L^2_p}\Big)\\
&=&\parallel J\big((J(a))^{L^2_p}\big)\parallel.
\end{array}$$
Hence $\big(J(a)\big)^{L^2_p}$ is zero, for every $a$ in $E^2_p.$\\
Next, for an integral submanifold $\mathcal{O}$ of $\Lambda$, we define $g_\mathcal{O}$ and $J_\mathcal{O}$ as follows
$$J_\mathcal{O}(X)=\rho(J(a))\ \   \ \  \ \  , \ \ g_\mathcal{O}(X,Y)=g(a,b),$$
where $X,Y \in T\mathcal{O}$ and $a,b\in E^2$ such that $\rho(a)=X$ and $\rho(b)=Y$.\\
Clearly $J_\mathcal{O}$ is an almost complex structure and $g_\mathcal{O}$ a Riemannian metric on $M$ such that the triple $(\omega,J_\mathcal{O},g_\mathcal{O})$ is compatible on $\mathcal{O}$.
We have proved the following result:
\begin{theorem}
Let $(\omega,J,g)$ be a compatible triple on $\mathcal{A}$ together with decomposition $(E^1_p+E^2_p)+L_p$ for $\mathcal{A}_p$, such that $J$ preserves $L^2$. Then for every integral submanifold $\mathcal{O}$ of $\Lambda=\rho(E^2+L^1)$, $(\omega_\mathcal{O},J_\mathcal{O},g_\mathcal{O})$ is a compatible triple on $\mathcal{O}$.
\end{theorem}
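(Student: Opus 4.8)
The plan is to transport the compatible triple on $\mathcal{A}$ down to $\mathcal{O}$ through the anchor, after first securing that the restriction of $\rho$ to $E^2$ is the right vehicle for this transport. The first thing I would establish is that $\rho|_{E^2}:E^2\to T\mathcal{O}$ is a fibrewise isomorphism. Injectivity is immediate from the decomposition $\mathcal{A}_p=(E^1_p+E^2_p)\oplus L_p$, which forces $E^2_p\cap L_p=0=E^2_p\cap\ker\rho_p$; surjectivity onto $\Lambda=\rho(E^2+L^1)=\rho(E^2)$ holds because $L^1\subseteq L=\ker\rho$. This isomorphism is exactly what makes the definitions $J_\mathcal{O}(\rho(a))=\rho(J(a))$ and $g_\mathcal{O}(\rho(a),\rho(b))=g(a,b)$ for $a,b\in E^2$ unambiguous. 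I would also record that, although $\omega_\mathcal{O}$ was defined using $E^2+L^1$ representatives, replacing them by their $E^2$-parts changes nothing: any correction term lies in $L^1\subseteq L$, and both $E^2=(L+E^1)^\omega$ and $L^1=L\cap L^\omega$ are $\omega$-orthogonal to $L$, so all the extra pairings vanish. Hence $\omega_\mathcal{O}(\rho(a),\rho(b))=\omega(a,b)$ for $a,b\in E^2$ as well.

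The second ingredient I would invoke is the $J$-invariance of $E^2$, which is precisely where the hypothesis that $J$ preserves $L^2$ enters and which was verified in the construction preceding the statement: for $a\in E^2_p$ one shows $\big(J(a)\big)^{L^2_p}=0$, while $J$ maps $L^1+E^1$ into itself so no $L^1$- or $E^1$-component appears either. With $J(E^2)\subseteq E^2$ in hand, the fact that $J_\mathcal{O}$ is an almost complex structure is a one-line consequence: for $X=\rho(a)$ with $a\in E^2$ one has $J_\mathcal{O}^2(X)=\rho(J(J(a)))=\rho(-a)=-X$, using $J^2=-\mathrm{id}$. Likewise $g_\mathcal{O}$ is symmetric because $g$ is, and positive-definite because $g_\mathcal{O}(X,X)=g(a,a)=0$ forces $a=0$ and hence $X=\rho(a)=0$ by the injectivity above.

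It then remains to verify the two compatibility identities, and these follow by pushing the corresponding identities for $(\omega,J,g)$ through $\rho|_{E^2}$. Writing $X=\rho(a)$, $Y=\rho(b)$ with $a,b\in E^2$, the metric-compatibility $g_\mathcal{O}(J_\mathcal{O}(X),J_\mathcal{O}(Y))=g_\mathcal{O}(X,Y)$ reduces to $g(J(a),J(b))=g(a,b)$ because $J(a),J(b)\in E^2$, and the identity $\omega_\mathcal{O}(X,Y)=g_\mathcal{O}(X,J_\mathcal{O}(Y))$ reduces to $\omega(a,b)=g(a,J(b))$; both are the defining compatibility relations of the triple on $\mathcal{A}$. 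Since $\omega_\mathcal{O}$ has already been shown to be a symplectic form on $\mathcal{O}$, this completes the verification that $(\omega_\mathcal{O},J_\mathcal{O},g_\mathcal{O})$ is a compatible triple. I expect the only genuinely delicate point to be the $J$-invariance of $E^2$: everything else is a formal transport along the isomorphism $\rho|_{E^2}$, but that invariance is exactly the content carried by the hypothesis $J(L^2)\subseteq L^2$ and is the step without which $J_\mathcal{O}$ would neither be well-defined nor square to $-\mathrm{id}$.
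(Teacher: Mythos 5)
Your proposal is correct and follows essentially the same route as the paper: the paper's argument (given in the discussion immediately preceding the theorem) likewise hinges on showing $J(E^2)\subseteq E^2$ via the hypothesis $J(L^2)\subseteq L^2$ and the orthogonality of $E^2$ to $L^1+E^1$, and then transports $J$ and $g$ to $\mathcal{O}$ through $\rho|_{E^2}$. The only difference is that you explicitly verify the well-definedness of the transport and the two compatibility identities, which the paper dismisses as ``clearly''; your added detail is accurate.
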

\begin{corollary}
Let $(\omega,J,g)$ be a compatible triple on a transitive Lie algebroid $\mathcal{A}$ such that $J$ is admissible. Then $\omega_{|_L}$ is nondegenerate, $L^1$ and $E^1$ are null,  and $\Lambda=TM$. Also $(M,\omega_M)$ is a symplectic manifold and the triple $(\omega_M,g_M,J_M)$ is compatible. Moreover the Poisson structure induced by $\omega$ is equal to the Poisson structure induce by $\omega_M$ on $M$.
\end{corollary}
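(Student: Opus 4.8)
The plan is to reduce everything to a single computation showing $L^1=0$, after which all remaining assertions follow directly from the theorems already established. First I would exploit admissibility: by the characterization theorem proved earlier, an almost complex structure $J$ on a transitive Lie algebroid is admissible if and only if $J(L)\subseteq L$. Take any $T_p\in L^1_p=L_p\cap L_p^\omega$. Since $J(T_p)\in L_p$ by admissibility and $T_p\in L_p^\omega$, the defining property of the symplectic orthogonal gives $\omega(T_p,J(T_p))=0$. On the other hand, compatibility yields $\omega(T_p,J(T_p))=g(T_p,J^2(T_p))=-g(T_p,T_p)$, so $g(T_p,T_p)=0$ and hence $T_p=0$. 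This proves $L^1=0$, which in turn gives $E^1=J(L^1)=0$ and, since $L^1=L\cap L^\omega$, the nondegeneracy of $\omega|_L$.

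Next I would identify $E^2$ and compute $\Lambda$. With $E^1=0$ we have $E^2=(L+E^1)^\omega=L^\omega$. Because $\omega$ is nondegenerate on $\mathcal{A}$ and $\omega|_L$ is nondegenerate, standard symplectic linear algebra gives the splitting $\mathcal{A}=L\oplus L^\omega=L\oplus E^2$. As $\ker\rho=L$ and $\rho$ is surjective, the restriction $\rho|_{E^2}:E^2\to TM$ is a fiberwise isomorphism; therefore $\Lambda=\rho(E^2+L^1)=\rho(E^2)=TM$. In particular the symplectic generalized distribution has a single leaf, namely $M$ itself, so $\mathcal{O}=M$.

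The remaining structural claims I would obtain by feeding the above into the preceding theorems. Since $L^1=0$ forces $L^2=L$, admissibility $J(L)\subseteq L$ says exactly that $J$ preserves $L^2$, so the hypotheses of the previous theorem are met; applying it with $\mathcal{O}=M$ produces the compatible triple $(\omega_M,J_M,g_M)$ on $M$. Here one checks that the abstractly induced $J_\mathcal{O}$ agrees with the admissibility $J_M$: for $a\in E^2$ with $\rho(a)=X$ one has $J_\mathcal{O}(X)=\rho(J(a))=J_M(\rho(a))=J_M(X)$, and similarly $g_\mathcal{O},\omega_\mathcal{O}$ are the intended $g_M,\omega_M$. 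Moreover $\omega_M=\omega_\mathcal{O}$ was already shown to be closed and nondegenerate in the discussion preceding that theorem, so $(M,\omega_M)$ is a symplectic manifold. Finally, the equality of Poisson structures is the specialization of the Poisson comparison theorem to the single leaf $\mathcal{O}=M$: there $\{f,g\}|_M=\{f|_M,g|_M\}_M$ reads $\{f,g\}=\{f,g\}_M$, the latter being the bracket induced by $\omega_M$.

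The only substantive obstacle is the first step, the vanishing of $L^1$; every other assertion is a direct consequence of that fact together with the already proven theorems. The one point requiring slight care is the identification $\mathcal{O}=M$ and the verification that the induced $J_\mathcal{O},g_\mathcal{O},\omega_\mathcal{O}$ coincide with the $J_M$ coming from admissibility and with the intended $g_M,\omega_M$.
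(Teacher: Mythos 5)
Your proposal is correct and follows exactly the route the paper intends: the paper leaves this corollary without an explicit proof, and your derivation (admissibility $J(L)\subseteq L$ plus compatibility forces $g(T_p,T_p)=-\omega(T_p,J(T_p))=0$ on $L^1$, hence $L^1=E^1=0$, $\omega|_L$ nondegenerate, $E^2=L^\omega$ maps isomorphically onto $TM$, and the single leaf $\mathcal{O}=M$ lets the two preceding theorems deliver the compatible triple and the equality of Poisson brackets) is precisely the intended specialization of the preceding results. The only detail worth keeping is the one you already flagged: the check that $J_{\mathcal{O}}$, $g_{\mathcal{O}}$, $\omega_{\mathcal{O}}$ coincide with the $J_M$, $g_M$, $\omega_M$ coming from admissibility, which your identity $J_{\mathcal{O}}(X)=\rho(J(a))=J_M(\rho(a))=J_M(X)$ settles.
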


\begin{theorem}\label{E}
Let $(\omega,J,g)$ be a compatible triple on a Lie algebroid $\mathcal{A}$ with the decomposition $\mathcal{A}_p=(E^1_p+E^2_p)+(L^1_p+L^2_p)$. Then for any leaf $\mathcal{N}$ of the characteristic foliation of $\mathcal{A}$, the Lie algebroid $\big((E^2+L^1)_\mathcal{N},\rho_{|_{(E^2+L^1)_\mathcal{N}}},[\,,]_{|_{(E^2+L^1)_\mathcal{N}}}\big)$  and $(T^*\mathcal{N},\pi^*,[\,,]_{T^*\mathcal{N}})$ are isomorphic over $\mathcal{N}$.
\end{theorem}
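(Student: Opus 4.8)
The plan is to exhibit an explicit bundle isomorphism over $\mathcal{N}$ built from $\omega$, and to check that it intertwines anchors and brackets. First I would record the linear algebra that makes the dimensions match. Since $\omega$ is nondegenerate on $\mathcal{A}_p$, every subspace $W$ satisfies $\dim W^\omega=\dim\mathcal{A}_p-\dim W$. Writing $\ell=\dim L_p$ and $k=\dim L^1_p$, the constructions give $\dim E^1_p=k$ and $\dim E^2_p=\dim(L_p+E^1_p)^\omega=\dim\mathcal{A}_p-\ell-k$ (here $E^1_p\cap L_p=0$ because $E^1_p\perp L_p$ for the positive definite $g$), while $E^2_p\cap L_p=0$ follows from the direct decomposition $\mathcal{A}_p=(E^1_p+E^2_p)\oplus L_p$. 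Hence $\dim(E^2_p+L^1_p)=\dim\mathcal{A}_p-\ell$. On the other hand $E^2_p+L^1_p\subseteq L_p^\omega$ (as $E^2_p=(L_p+E^1_p)^\omega$ and $L^1_p=L_p\cap L^\omega_p$), and $\dim L^\omega_p=\dim\mathcal{A}_p-\ell$, so in fact
$$L_p^\omega=E^2_p+L^1_p.$$
This identity is the backbone of the argument: $a\in E^2+L^1$ is exactly the condition $i_a\omega|_{L}=0$, the kernel of $\rho|_{E^2+L^1}$ is $L^1$, and $\rho(E^2+L^1)$ equals the symplectic distribution $\Lambda$ restricted to $\mathcal{N}$.

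Next I would define the map. For $a\in(E^2+L^1)_p$ with $p\in\mathcal{N}$, the covector $i_a\omega\in\mathcal{A}^*_p$ annihilates $L_p=\ker\rho_p$, so it descends along $\rho_p\colon\mathcal{A}_p\to T_p\mathcal{N}$ to a covector $\Phi(a)\in T^*_p\mathcal{N}$, determined by $\Phi(a)(X)=\omega(a,b)$ for any $b$ with $\rho(b)=X$ (well defined since $a\in L^\omega_p$). This $\Phi$ is a bundle map over $\mathrm{id}_\mathcal{N}$; it is injective because $\omega(a,\cdot)\equiv 0$ on $\mathcal{A}_p$ forces $a=0$, and by the count above it is an isomorphism. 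The Poisson structure on $\mathcal{N}$ is the one induced on $M$ by the Hamiltonian sections $a_f$, restricted to $\mathcal{N}$: since $\mathcal{N}$ is a union of the symplectic leaves $\mathcal{O}$ of $\Lambda$, the theorem relating $\{\,,\}$ and $\omega_{\mathcal{O}}$ shows $\mathcal{N}$ is a Poisson submanifold, so $\{f,g\}_\pi=\{f,g\}|_\mathcal{N}$. Because $i_{a_f}\omega=\mathrm{d}_\mathcal{A}f$ annihilates $L$, each $a_f$ lies in $\Gamma(L^\omega)=\Gamma(E^2+L^1)$, and for $X=\rho(b)$ one gets $\Phi(a_f)(X)=\mathrm{d}_\mathcal{A}f(b)=\mathrm{d}f(X)$, i.e. $\Phi(a_f)=\mathrm{d}_\mathcal{N}f$. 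Since the $\mathrm{d}_\mathcal{N}f$ frame $T^*\mathcal{N}$ fiberwise, the $a_f$ span $(E^2+L^1)_p=L^\omega_p$, so comparing $\pi^\sharp(\mathrm{d}_\mathcal{N}f)$ with $\rho(a_f)$ on this spanning set yields anchor compatibility $\pi^\sharp\circ\Phi=\rho$ (after fixing the sign of $\Phi$ to match the convention relating $\pi^\sharp$ and $\omega$).

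Finally I would check that $\Phi$ preserves brackets, which by the criterion for morphisms with diffeomorphic base map amounts to $\Phi([a,b])=[\Phi(a),\Phi(b)]_{T^*\mathcal{N}}$. I would first verify this on Hamiltonian generators. Using $\mathcal{L}_S=i_S\mathrm{d}+\mathrm{d}i_S$, the relation $i_{[S,T]}=[\mathcal{L}_S,i_T]$, and $\mathrm{d}_\mathcal{A}\omega=0$, one computes $i_{[a_f,a_g]}\omega=-\mathrm{d}_\mathcal{A}\{f,g\}=-i_{a_{\{f,g\}}}\omega$, whence $[a_f,a_g]=-a_{\{f,g\}}$ by nondegeneracy; on the target side the cotangent Lie algebroid satisfies $[\mathrm{d}_\mathcal{N}f,\mathrm{d}_\mathcal{N}g]_\pi=\mathrm{d}_\mathcal{N}\{f,g\}_\pi$. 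Since $\Phi(a_f)=\pm\mathrm{d}_\mathcal{N}f$ and $\{f,g\}_\pi=\{f,g\}|_\mathcal{N}$, the two sides agree on exact generators. To pass to arbitrary sections I would write a section of $E^2+L^1$ in a local Hamiltonian frame and propagate the identity by the Leibniz rule: the $C^\infty$-linearity of $\Phi$ together with the already-established anchor compatibility makes the two Leibniz correction terms coincide, so bracket-compatibility on a frame forces it for all sections.

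The main obstacle I anticipate is precisely this last reduction, namely controlling the bracket on all of $\Gamma(E^2+L^1)$ rather than on Hamiltonian sections alone, while keeping track that every object (the sections $a_f$, the distribution $\Lambda$, and the Poisson bracket) restricts consistently to the single leaf $\mathcal{N}$; the sign bookkeeping relating $i_\bullet\omega$, $\pi^\sharp$, and the cotangent bracket is the other place where care is needed.
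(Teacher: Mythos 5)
Your proposal is correct, and it is built on the same map as the paper's own proof: both send $a\in(E^2+L^1)_{\mathcal N}$ to (minus) the covector on $\mathcal N$ obtained from $i_a\omega$, using that $i_a\omega$ annihilates $L_{\mathcal N}=\ker\rho_{\mathcal N}$. The differences are in the supporting steps. First, you prove the identity $L_p^{\omega}=E^2_p+L^1_p$ by a dimension count; the paper asserts this equality (together with $L^{\omega_{\mathcal N}}_{\mathcal N}=-J(L_{\mathcal N}^{\perp})$) without the count, and then uses the auxiliary map $J^{\perp}$ to produce a decomposition $(E^1+E^2)_{\mathcal N}+L_{\mathcal N}$ and a splitting $\lambda:T\mathcal N\to\mathcal A_{\mathcal N}$ through which it defines $\psi(a)=-\lambda^*(i_a\omega)$. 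Your version descends $i_a\omega$ along $\rho$ directly, avoiding the splitting and the smoothness discussion of $(E^1+E^2)_{\mathcal N}$ altogether, which is a genuine simplification. Second, for bracket compatibility the paper computes $[\psi(a),\psi(b)]$ for arbitrary sections directly from the cotangent-bracket formula together with $\mathrm{d}_{\mathcal A}\omega=0$, whereas you verify it only on Hamiltonian sections ($[a_f,a_g]=-a_{\{f,g\}}$ against $[\mathrm{d}f,\mathrm{d}g]_{\pi}=\mathrm{d}\{f,g\}$) and then extend by $C^{\infty}$-linearity and the Leibniz rule, using that the $a_f$ span each fibre of $E^2+L^1$; both arguments are valid, and yours localizes the computation at the cost of the frame and Leibniz bookkeeping you anticipated. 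One point you should pin down rather than leave as ``fix the sign'': with your definition $\Phi(a)(X)=\omega(a,b)$ one gets $\Phi(a_f)=+\mathrm{d}f$, and since $f\mapsto a_f$ is an anti-homomorphism of brackets while $f\mapsto\mathrm{d}f$ is a homomorphism, that choice makes $\Phi$ reverse brackets; you must take $\Phi(a)(X)=-\omega(a,b)$, i.e.\ $\Phi(a_f)=-\mathrm{d}f$, which is exactly the minus sign in the paper's $\psi$, and with it the anchor identity $\rho(a_f)=-\pi^{\sharp}(\mathrm{d}f)$ also comes out consistently.
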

\begin{proof}
Note that $(\mathcal{A}_\mathcal{N},\rho_\mathcal{N},[\,,]_\mathcal{N})$ is a transitive Lie algebroid over $\mathcal{N}$. Also $L_\mathcal{N}=Ker(\rho_{\mathcal{N}})$ is a Lie algebra subbundle of $(\mathcal{A}_\mathcal{N},\rho_\mathcal{N},[\,,]_\mathcal{N})$. Since $E^2_\mathcal{N}+L^1_\mathcal{N}=L_\mathcal{N}^{\omega_\mathcal{N}}=-J(L_\mathcal{N}^\perp)$, where $\omega_\mathcal{N}$ is the restriction of $\omega$ to $\mathcal{A}_\mathcal{N}$, $(E^2+L^1)_\mathcal{N}$ is a vector subbundle of $\mathcal{A}_\mathcal{N}$. Moreover, by (\ref{[XY]}), $\Gamma(E^2+L^1)_\mathcal{N}$ is closed under the bracket and thus inherits the Lie algebroid properties.\\ Looking at $\mathcal{A}_\mathcal{N}$ as $L_\mathcal{N}+L_\mathcal{N}^\perp$, one can consider the vector bundle map
\begin{align*}
J^\perp : &\mathcal{A}_\mathcal{N}\longrightarrow \mathcal{A}_\mathcal{N}\\
&a\mapsto J(a)^{L_\mathcal{N}^\perp}.
\end{align*}
It is easy to see that $J^\perp\big((E^2+L^1)_\mathcal{N}\big)=(E^1+E^2)_\mathcal{N}$. Thus $(E^1+E^2)_\mathcal{N}$ is smooth and so $(E^1+E^2)_\mathcal{N}+L_\mathcal{N}$ is a decomposition of $\mathcal{A}_\mathcal{N}$. Let $\lambda : T\mathcal{N}\rightarrow \mathcal{A}_\mathcal{N}$ be the corresponding splitting with respect to this decomposition given by Theorem \ref{split}. We define
\begin{align*}
\psi : &(E^2+L^1)_\mathcal{N}\longrightarrow T^*\mathcal{N}\\
&a\longmapsto -\lambda^*(i_a\omega)
\end{align*}
Since $\omega$ is nondegenerate, $\psi$ is one to one. Thus $\psi$ is an isomorphism of vector bundles.\\
 To complete the proof we need to prove that $\psi$ preserves the bracket and anchor maps. To do this, we take an arbitrary $f\in C^\infty(\mathcal{N})$ and denote its Hamiltonian section on $\mathcal{N}$ by $a_f$. Then
\begin{align*}
\big(\rho(a_f)+\pi^*(df)\big).g=&dg(a_f)+dg(\pi^*(df))\\
=&\omega(a_g,a_f)+\pi(df,dg)\\
=&\{g,f\}-\{g,f\}\\
=&0,
\end{align*}
i.e., $\rho(a_f)=-\pi^*(df)$. Thus $a_f+\lambda(\pi^*(df))\in L_\mathcal{N}$. Now for $a\in (E^2+L^1)_\mathcal{N}$ and $f\in C^\infty(\mathcal{N})$, we have
\begin{align*}
\big(\rho(a)-\pi^*(\psi(a))\big).f=&\omega(a_f,a)-\pi(\psi(a),df)\\
=&\omega(a_f,a)+\psi(a)(\pi^*(df))\\
=&\omega(a_f,a)-\omega(a,\lambda(\pi^*(df)))\\
=&\omega(a_f+\lambda(\pi^*(df)),a)\\
=&0.
\end{align*}
Thus $\rho(a)=\pi^*(\psi(a))$, i.e., $\psi$ preserves the anchor maps.\\
On the other hand, for $a,b\in \Gamma\big((E^2+L^1)_\mathcal{N}\big)$ and $X\in \mathcal{X}(\mathcal{N})$ we have
\begin{align*}
[\psi(a),\psi(b)](X)=&\mathcal{L}_{\pi^*(\psi(a))}\psi(b)(X)-\mathcal{L}_{\pi^*(\psi(b))}\psi(a)(X)-d(\pi(\psi(a),\psi(b))(X)\\
=&\mathcal{L}_{\rho(a)}\psi(b)(X)-\mathcal{L}_{\rho(b)}\psi(a)(X)-d(\pi(\psi(a),\psi(b))(X)\\
=&\rho(a).\psi(b)(X)-\psi(b)([\rho(a),X])-\rho(b).\psi(a)(X)\\
&\psi(a)([\rho(b),X])-d(\psi(b)(\pi^*(\psi(a))(X)\\
=&-\rho(a).\omega(b,\lambda(X))+\omega(b,\lambda([\rho(a),X]))\\
&+\rho(b).\omega(a,\lambda(X))-\omega(a,\lambda([\rho(b),X]))+X.\omega(b,\lambda(\rho(a)))\\
=&-d\omega(a,b,\lambda(X))-\omega([a,b],\lambda(X))\\
=&-\lambda^*(i_{[a,b]}\omega)(X)\\
=&\psi([a,b])(X),
\end{align*}
which means that $\psi$ preserves the bracket and so it is a Lie algebroid isomorphism.
\end{proof}

\begin{theorem}\label{L0}
If $(\omega,J,g)$ is a compatible triple on transitive Lie algebroid $\mathcal{A}$ such that $g$ is compatible and $J$ is admissible then
\begin{enumerate}
\item The bracket of every two sections of L is zero.
\item $(\mathcal{A},\omega,J,g)$ is K$\ddot{a}$hler if and only if $(M,\omega_M,J_M,g_M)$ is K$\ddot{a}$hler.
\end{enumerate}
\end{theorem}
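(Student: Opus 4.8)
The plan is to exploit the structural consequences of admissibility recorded in the Corollary preceding the theorem: on the transitive $\mathcal{A}$ with $J$ admissible and $g$ invariant (this is how I read the hypothesis ``$g$ compatible'', following the abstract), we have $L^1=E^1=0$, so the compatible decomposition collapses to an orthogonal splitting $\mathcal{A}=E\oplus L$ with $E=L^\omega$, both summands $J$-invariant, $\omega|_L$ nondegenerate, and $\rho|_E:E\to TM$ a bundle isomorphism carrying $J|_E$ to $J_M$. Write $\lambda=(\rho|_E)^{-1}$ for the associated splitting, so $J\lambda(X)=\lambda(J_M X)$. I will use throughout that the adjoint connection $\nabla^\lambda_X S=[\lambda(X),S]$ preserves $L$ and (invariance) $g_L$, that $J(L)\subseteq L$, that $g$ is $J$-skew, $g(Ja,b)=-g(a,Jb)$, and that Theorem \ref{DL} gives $\nabla_S T=\frac{1}{2}[S,T]$ on $L$ for the Levi-Civita connection $\nabla$ of $\mathcal{A}$.

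For (1) I work inside $L$. Expanding $0=d^{\mathcal{A}}\omega(S,T,U)$ for $S,T,U\in\Gamma(L)$ (all anchors vanish) and rewriting via $\omega(a,b)=g(a,Jb)$ and ad-invariance of $g_L$ produces the cocycle identity
\[ [JT,U]+[T,JU]=J[T,U]. \]
Next I insert $\nabla_S T=\frac{1}{2}[S,T]$ together with this identity into (\ref{nablaN}); transporting $J$ across $g$ and using invariance on both sides collapses (\ref{nablaN}) to
\[ [JT,JU]=-[T,U]. \]
Finally I play these two relations against each other: replacing $T$ by $JT$ and, separately, $U$ by $JU$ in the cocycle identity and simplifying with the second relation gives $[JT,U]=[T,JU]=2J[T,U]$; reinserting into the cocycle identity forces $3J[T,U]=0$, hence $[S,T]=0$ on $L$. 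The only delicate point here is sign bookkeeping when moving $J$ through $g$ and $\mathrm{ad}$.

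For (2) I first reduce ``K\"ahler'' to $N=0$, where $N$ is the Nijenhuis tensor of $J$: since $\omega$ is closed and $\omega=g(\cdot,J\cdot)$, the triple is almost K\"ahler, and (\ref{nablaN}) gives $N=0\iff\nabla J=0$, the K\"ahler condition. The implication $\mathcal{A}$ K\"ahler $\Rightarrow M$ K\"ahler is then immediate from (\ref{NNM}): choosing $a,b$ with $\rho(a)=X$, $\rho(b)=Y$ gives $N_{J_M}(X,Y)=\rho(N(a,b))=0$, while $\omega_M$ is closed by the Corollary. For the converse I assume $N_{J_M}=0$ and show $N\equiv0$ by evaluating the tensor $N$ on the three argument types of the splitting. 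Two preliminary vanishings make this work: expanding $0=d^{\mathcal{A}}\omega(\lambda X,\lambda Y,S)$ with $S\in\Gamma(L)$ gives $g(\Omega(X,Y),JS)=0$ for all $S$, so the curvature form vanishes, $\Omega\equiv0$; and the Koszul formula with invariance and $\Omega=0$ gives $\nabla_S(\lambda(Y))=0$ for $S\in\Gamma(L)$. Granting these, case (i) $N(S,T)=0$ is immediate from (1) and $J(L)\subseteq L$, and case (iii) reduces, using $\Omega=0$, to $N(\lambda X,\lambda Y)=\lambda(N_{J_M}(X,Y))=0$.

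I expect the mixed case $N(\lambda X,S)$ to be the main obstacle, since the bracket expansion only shows $N(\lambda X,S)=(\nabla^\lambda_{J_M X}J)S-J(\nabla^\lambda_X J)S\in\Gamma(L)$, which is not visibly zero. The trick is to pair it against $L$: since $N(\lambda X,S)\in\Gamma(L)$ by (\ref{NNM}), it suffices to show $g(N(\lambda X,S),JU)=0$ for all $U\in\Gamma(L)$. Applying (\ref{nablaN}) with $a=U\in L$, $b=\lambda X$, $c=S$ gives $g(N(\lambda X,S),JU)=2g((\nabla_U J)\lambda X,S)$, and the second preliminary vanishing $\nabla_U(\lambda(Y))=0$ yields $(\nabla_U J)\lambda X=\nabla_U(\lambda(J_M X))-J\nabla_U(\lambda X)=0$. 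As $JU$ ranges over all of $L$ and $g|_L$ is nondegenerate, $N(\lambda X,S)=0$. By tensoriality $N\equiv0$, so $\mathcal{A}$ is K\"ahler, which closes the equivalence.
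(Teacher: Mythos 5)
Your proof is correct, and for part (1) it is essentially the paper's argument: the same cocycle identity $[JT,U]+[T,JU]=J[T,U]$, extracted from $d^{\mathcal A}\omega=0$ on $\Gamma(L)$ via compatibility and invariance of $g$, is combined with a second relation to force $3J[T,U]=0$. The only difference is which second relation you take from (\ref{nablaN}): the paper derives $[T,JU]=2J[T,U]$, you derive $[JT,JU]=-[T,U]$; both are valid consequences, and in fact your relation already follows from the cocycle identity alone by substituting $T\mapsto JT$ and $U\mapsto JU$, so on your route (\ref{nablaN}) and Theorem \ref{DL} are not even needed for part (1). For part (2) your case analysis on the splitting $L^\perp\oplus L$ is the paper's strategy, but you carry out the $E$-$E$ case more carefully, and this is worth noting: the paper asserts that $N_{J_M}=0$ together with (\ref{NNM}) gives $N_J(a,b)=0$ for $a,b\in\Gamma(L^\perp)$, whereas (\ref{NNM}) only yields $\rho\big(N_J(a,b)\big)=0$, i.e.\ $N_J(a,b)\in\Gamma(L)$. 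Your two preliminary vanishings --- $\Omega\equiv 0$, hence $[\lambda X,\lambda Y]=\lambda([X,Y])$, and $\nabla_S\lambda(Y)=0$ for $S\in\Gamma(L)$ via the Koszul formula with invariance --- close exactly this gap, since they give $N(\lambda X,\lambda Y)=\lambda\big(N_{J_M}(X,Y)\big)$ on the nose; and your handling of the mixed case by observing $N(\lambda X,S)\in\Gamma(L)$ and pairing it against $L$ through (\ref{nablaN}) is a sound, slightly more explicit version of the paper's computation. In short, the proposal reaches the same conclusions by the same overall mechanism but supplies a complete argument where the paper's backward implication in (2) is elliptic, at the cost of the two extra auxiliary vanishing lemmas.
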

\begin{proof}
(1) $\omega$ is closed. Therefore, for $S_1,S_2,S_3 \in \Gamma(L)$ we have
$$0=d\omega(S_1,S_2,S_3)=-\omega([S_1,S_2],S_3)+\omega([S_1,S_3],S_2)-\omega([S_2,S_3],S_1).$$
Since $(\omega,J,g)$ are compatible we can rewrite the above equation as
$$-g([S_1,S_2],JS_3)+g([S_1,S_3],JS_2)-g([S_2,S_3],JS_1)=0.$$
Since $g$ is invariant we have
\begin{equation}\label{g}
-g([S_1,S_2],JS_3)-g([S_1,JS_2],S_3)-g([JS_1,S_2],S_3)=0.
\end{equation}
This shows that
$$-[S_1,S_2]-J([S_1,JS_2])-J([JS_1,S_2])=0.$$
Using the above equation, one can calculate $N(S_1,S_2)$ as follows
\begin{equation}\label{N1}
 N(S_1,S_2)=[JS_1,JS_2]-[S_1,S_2]-J[JS_1,S_2]-J[S_1,JS_2]=[JS_1,JS_2].
\end{equation}
Replacing $S_1$  by $JS_1$, in (\ref{g}) we have
\begin{equation*}
g(J[JS_1,S_2],S_3)-g([JS_1,JS_2],S_3)+g([S_1,S_2],S_3)=0,
\end{equation*}
which means that
$$-[JS_1,JS_2]+[S_1,S_2]+J[JS_1,S_2]=0.$$
Therefore,
\begin{equation}\label{N2}
 N(S_1,S_2)=-J[S_1,JS_2].
\end{equation}
By applying (\ref{N2}) and (\ref{DL}) in (\ref{nablaN}) we have
\begin{equation}
\begin{array}{rcl}\vspace*{0.2cm}
0&=&2g\big((\nabla_{S_1}J)S_2,S_3\big)-g\big(N(S_2,S_3),JS_1\big)\\ \vspace*{0.2cm}
&=&g([S_1,JS_2],S_3)-g(J[S_1,S_2],S_3)+g(J[S_2,JS_3],JS_1)\\ \vspace*{0.2cm}
&=&g([S_1,JS_2],S_3)-g(J[S_1,S_2],S_3)-g(J[S_1,S_2],S_3)\\
&=&g([S_1,JS_2]-2J[S_1,S_2],S_3),
\end{array}
\end{equation}
i.e., for every $S_1,S_2\in \Gamma(L)$
$$[S_1,JS_2]=2J[S_1,S_2].$$
Applying this to (\ref{N1}) and (\ref{N2}) we get $N(S_1,S_2)=[JS_1,JS_2]=-4[S_1,S_2]$. On the other hand, $N(S_1,S_2)=-J[S_1,JS_2]=2[S_1,S_2]$. 
Therefore, $[S_1,S_2]=0$ and $N(S_1,S_2)=0$.\\
(2) Now look at $\mathcal{A}$ as $L^\perp+L$. By the proof of Theorem \ref{E}, $L^\perp$ is  closed under the restriction bracket. Suppose that $(M,\omega_M,J_M,g_M)$ is K$\ddot{a}$hler, therefore, $N_{J_M}=0$, i.e.,
$$N_{J_M}(X,Y)=0 \quad \quad \ \ (X,Y \in \mathcal{X}(M)).$$
By (\ref{NNM})
$$N_J(a,b)=0 \ \   \ \   \  \   \ \   \ \  (a,b \in \Gamma(L^\perp)).$$
We have just proved, in the previous part, that $N(S,T)=0$  for every $S,T\in\Gamma(L)$. To complete the proof, it suffices to show that $N(S,a)=0$ for every $S\in \Gamma(L)$ and $a\in\Gamma(L^\perp)$. Using (\ref{nablaN}) we have
$$g(N(S,a),T)=-g((\nabla_{JT}J)(S),a)=0 \ \  \  \   \ \  (T\in \Gamma(L)).$$
On the other hand, $g(N(S,a),b)=0$ for every $b\in\Gamma(L^\perp)$ and so $N(S,a)=0$. Thus $(\mathcal{A},J,\omega,g)$ is K$\ddot{a}$hler. The converse is trivial.
\end{proof}
\begin{theorem}
 On a transitive K$\ddot{a}$hler Lie algebroid $(\mathcal{A},\omega,J,g)$ if $g$ is invariant then the restriction of the bracket on $\Gamma(L)$ is zero.
\end{theorem}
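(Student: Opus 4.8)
The plan is to reduce the full K\"ahler hypothesis to the single identity $\nabla J=0$ and then feed it, together with the invariance of $g$, into the closedness of $\omega$. Since $(\mathcal{A},\omega,J,g)$ is K\"ahler its Nijenhuis tensor vanishes, $N\equiv 0$, so the right-hand side of (\ref{nablaN}) is identically zero and I obtain $\nabla J=0$, where $\nabla$ is the Levi--Civita connection of $g$. Because $g$ is invariant, Theorem \ref{DL} gives $\nabla_S T=\tfrac12[S,T]$ for all $S,T\in\Gamma(L)$, and I will also use that $\nabla$ is metric and that $\rho(S)=0$ whenever $S\in\Gamma(L)$.

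Next I would evaluate $d^{\mathcal{A}}\omega$ on three sections $S_1,S_2,S_3\in\Gamma(L)$. Since all the anchors vanish and $\omega(\cdot,\cdot)=g(\cdot,J\cdot)$, this yields
\[
-g([S_1,S_2],JS_3)+g([S_1,S_3],JS_2)-g([S_2,S_3],JS_1)=0.
\]
The key step is to show that the trilinear form $T(S_1,S_2,S_3):=g([S_1,S_2],JS_3)$ is symmetric in its last two arguments. For this I expand $g([S_1,S_2],JS_3)=2g(\nabla_{S_1}S_2,JS_3)$, use $\rho(S_1)=0$ together with metric compatibility to move the derivative onto the second factor, and then apply $\nabla J=0$ and Theorem \ref{DL} to rewrite $\nabla_{S_1}(JS_3)=\tfrac12 J[S_1,S_3]$; skew-symmetry of $J$ with respect to $g$ then gives $T(S_1,S_2,S_3)=T(S_1,S_3,S_2)$. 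Since $T$ is manifestly antisymmetric in its first two slots, being simultaneously symmetric in the last two forces $T\equiv 0$, that is, $g([S_1,S_2],JS_3)=0$ for all $S_i\in\Gamma(L)$. Equivalently $\omega([S_1,S_2],S_3)=0$ for every $S_3\in\Gamma(L)$, so $[S_1,S_2]\in L\cap L^{\omega}=L^1$; in other words the whole derived bracket lands in $L^1$.

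Finally I would close the argument using the internal Lie-algebra structure of the fibres. Invariance of the positive-definite metric $g$ makes each fibre $L_p$ reductive, so its derived subalgebra $[L_p,L_p]$ is semisimple and in particular perfect, $[[L_p,L_p],[L_p,L_p]]=[L_p,L_p]$. Combining this with the inclusion $[L,L]\subseteq L^1$ and with the vanishing of the bracket on $L^1$ established in the decomposition subsection gives
\[
[L,L]=\big[[L,L],[L,L]\big]\subseteq[L^1,L^1]=0,
\]
which is the desired conclusion. I expect the main obstacle to be precisely the passage in the middle paragraph: the bare identity coming from $d^{\mathcal{A}}\omega=0$ only shows $[L,L]\subseteq L^1$ rather than $[L,L]=0$, and, lacking the admissibility hypothesis of Theorem \ref{L0}, one cannot simplify the cross terms directly by invariance. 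The integration-by-parts computation powered by $\nabla J=0$, followed by the symmetry/antisymmetry cancellation and the perfectness argument, is what bridges this gap.
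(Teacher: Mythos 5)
Your proof is correct, but it takes a genuinely different route from the paper's. The paper works entirely inside the finer decomposition $L_p=L^1_p+L^2_p$: it uses $\nabla\omega=0$ together with Theorem \ref{DL} to get $[L^1_p,L^2_p]\subseteq L^1_p$, invariance of $g$ to get $[L^2_p,L^2_p]\subseteq L^2_p$, then $d\omega=0$ and $\nabla\omega=0$ again to force $[L^2_p,L^2_p]\subseteq L^1_p$ and hence $[L^2_p,L^2_p]=0$; combined with $[L^1_p,L^1_p]=0$ from the decomposition subsection and one further invariance computation it gets $[L^1_p,L^2_p]=0$, exhausting all cases. You instead prove the single inclusion $[L,L]\subseteq L^1$ in one stroke: the trilinear form $T(S_1,S_2,S_3)=g([S_1,S_2],JS_3)$ is antisymmetric in the first two slots and, via the chain $\rho|_L=0$, metric compatibility, $\nabla J=0$ and Theorem \ref{DL}, symmetric in the last two, hence identically zero; this computation is valid. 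Your closing step is the real departure: you invoke the structure theory of Lie algebras carrying an ad-invariant positive-definite inner product (each fibre $L_p$ is reductive, so $[L_p,L_p]$ is semisimple, hence perfect), and perfectness together with $[L^1_p,L^1_p]=0$ kills $[L_p,L_p]$. This is a clean external algebraic fact the paper never touches; in exchange, the paper's case-by-case argument stays entirely within the symplectic linear algebra of the decomposition. Two minor remarks: the $d^{\mathcal{A}}\omega=0$ identity you display in your second paragraph is never actually used, since the symmetry/antisymmetry cancellation already yields $T\equiv 0$ on its own; and your appeal to the vanishing of the bracket on $L^1$ is legitimate, as the paper establishes $[L^1_p,L^1_p]=0$ for any compatible triple in the decomposition subsection --- the same fact the paper's own proof relies on.
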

\begin{proof}
 Fix a point $p\in M$. Using the decomposition mentioned at the beginning of the section, one can write $\mathcal{A}_p=(E^1_p+E^2_p)+(L^1_p+L^2_p)$. Since  $(\mathcal{A},\omega,J,g)$ is K$\ddot{a}$hler, $\nabla\omega=\nabla J=0$, where $\nabla$ is the Levi-Civita connection of $g$. Now for $S,T\in L^2_p$ and $Z\in L^1_p$ we have
 \begin{align*}
 0=&(\nabla_S\omega)(T,Z)\\
 =&\omega(\nabla_ST,Z)+\omega(T,\nabla_SZ)\\
 =&\omega(\frac{1}{2}[S,T],Z)+\omega(T,\frac{1}{2}[S,Z])\\
 =&\omega(T,\frac{1}{2}[S,Z]).\\
 \end{align*}
 This means that
 \begin{equation}\label{L1L2L1}
 [L^1_p,L^2_p]\subseteq L^1_p.
 \end{equation}
 Using the above equation and the fact that $g$ is invariant, for $S,T\in L^2_p$ and $Z\in L^1_p$ we have
 \begin{equation}
 g([S,T],Z)=g(S,[T,Z])=0,
 \end{equation}
 and so
\begin{equation}\label{L2L2L2}
[L^2_p,L^2_p]\subseteq L^2_p.
\end{equation}
Moreover for $S,T,Z\in L^2_p$
\begin{align*}
0=&d\omega(S,T,Z)\\
=&-\omega([S,T],Z)+\omega([S,Z],T)-\omega([T,Z],S)\\
=&-\omega([S,T],Z)-2(\nabla_Z\omega)(S,T)\\
=&-\omega([S,T],Z),\\
\end{align*}
i.e., $[L^2_p,L^2_p]\subseteq L^1_p$. Hence  by (\ref{L2L2L2}) we have $[L^2_p,L^2_p]=0$.\\
Furthermore, we know that $[L^1_p,L^1_p]=0$. Thus for $S,T\in L^1_p$ and $Z\in L^2_p$ we have
$$0=g([S,T],Z)=g(S,[T,Z]),$$
i.e., $[L^1_p,l^2_p]\subseteq L^2_p$. Using (\ref{L1L2L1}) one can easily see that $[L^1_p,L^2_p]=0$. This proves that the restriction of the bracket on $L$ is zero.\\
\end{proof}
\section{Contact Lie algebroid}
\begin{definition}\cite{CIC}
 Let $\mathcal{A}$ be a Lie algebroid of rank $2n+1$ over a smooth m-dimensional  manifold $M$. A 1-form $\eta\in \Gamma(\mathcal{A}^*)$ is called {\it contact} if $\eta\wedge(d\eta)^n\neq 0$. In this case, $(\mathcal{A},\eta)$ is called a {\it contact Lie algebroid}.
 \end{definition}
 For a contact Lie algebroid $(\mathcal{A},\eta)$ there exists a unique section $\xi\in \Gamma \mathcal{A}$ called the {\it Reeb section} such that
 $$\eta(\xi)=1 ,   \ \    \ \    \ \   i_\xi\mathrm{d}\eta=0.$$
 A triple $(\varphi,\xi,\eta)$ is called an {\it almost contact structure} on $\mathcal{A}$ if $\varphi$ is a $(1,1)$ tensor section of $\Gamma(\mathcal{A}\otimes \mathcal{A}^*)$, $\xi\in \Gamma\mathcal{A}$, $\eta \in \Gamma(\mathcal{A}^*)$ and
 $$\eta(\xi)=1, \ \ \ \ \ \ \ \varphi^2=-id_\mathcal{A}+\eta\otimes\xi.$$
 Moreover $(\mathcal{A},\varphi,\xi,\eta)$ is called an {\it almost contact Lie algebroid}.\\
 A Riemannian metric $g$ on $\mathcal{A}$ is said to be compatible with an almost contact structure $(\varphi,\xi,\eta)$ if
 $$g(\varphi(S),\varphi(T))=g(S,T)-\eta(S)\eta(T) \ \ \ \ \ \ (S,T\in \Gamma\mathcal{A}).$$
 In this case, $(\mathcal{A},\varphi,\xi,\eta,g)$ is called an
 {\it almost contact Riemannian Lie algebroid}.\\
 For an almost contact Riemannian Lie algebroid $(\mathcal{A},\varphi,\xi,\eta,g)$ if
 $$\mathrm{d}\eta(S,T)=g(S,\varphi(T))$$
 then $\eta$ is a contact form, $\xi$ is the Reeb section and $(\mathcal{A},\varphi,\xi,\eta,g)$ is called a {\it contact Riemannian Lie algebroid}.\cite{CIC}\\
 Let $(\mathcal{A},\varphi,\xi,\eta)$ be an almost contact Lie algebroid. Then $\mathrm{D}:= Ker(\eta)$ is a vector subbundle of $\mathcal{A}$ of rank $2n$.  If $\eta$ is contact  then $\mathrm{d}\eta_{|_\mathrm{D}}$ is nondegenerate.\\
Let $(\mathcal{A},\eta)$ be a contact Lie algebroid, consider the vector bundle morphism
$$\psi:\mathcal{A}\longrightarrow\mathcal{A}^*$$
$$S\longmapsto i_S\mathrm{d}\eta.$$
It is easy to see that
$$\mathcal{A}^*=Im(\psi) \oplus\langle\eta\rangle.$$
Thus for $f\in C^\infty(M)$, there exists $S_f\in\Gamma(\mathcal{A})$ and $h\in C^\infty(M)$ such that
\begin{equation}\label{df}
\mathrm{d}f=i_{S_f}\mathrm{d}\eta+h\eta.
\end{equation}
In fact
$$h=\rho(\xi)\cdot f$$
If $S_f,\bar{S}_f$ satisfy (\ref{df}), there exist $k\in C^\infty(M)$ such that
\begin{equation}\label{s1s2}
S_f-\bar{S}_f=k\xi.
\end{equation}
Putting $a_f:=S_f-\eta(S_f)\xi$ one can easily see that $a_f$ satisfies the (\ref{df}). Also 
$a_f$ is independent of the choise of $S_f$. In fact, if $S_f,\bar{S}_f$ satisfy (\ref{df}), then by (\ref{s1s2}) we have
$$S_f-\eta(S_f)\xi=\bar{S}_f-\eta(\bar{S}_f)\xi.$$
We call $a_f$, the {\it Hamiltonian section} of $f$.

 Now we can define
$$\{\,,\}: C^\infty(M)\times C^\infty(M)\longrightarrow C^\infty(M)$$
$$(f,g)\longmapsto \mathrm{d}\eta(a_f,a_g).$$
In fact for $f,g\in C^\infty(M)$
\begin{equation}\label{[f,g]}
\{f,g\}=\rho(a_g)\cdot f=-\rho(a_f)\cdot g
\end{equation}
\begin{theorem}\label{pisson}
	On a contact Lie algebroid $(\mathcal{A},\eta)$ if $\rho(\xi)=0$ then the above bracket is a Poisson structure on $M$.
\end{theorem}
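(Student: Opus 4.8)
The plan is to verify the three defining properties of a Poisson bracket --- antisymmetry, the Leibniz rule, and the Jacobi identity --- of which the first two are purely formal and the last carries all the weight of the hypothesis $\rho(\xi)=0$. Bilinearity is evident. Antisymmetry is immediate from the antisymmetry of $\mathrm{d}\eta$. For the Leibniz rule I would invoke the identity (\ref{[f,g]}), which exhibits the bracket as a derivation in one slot, $\{f,g\}=-\rho(a_f)\cdot g$; then
$$\{f,gh\}=-\rho(a_f)\cdot(gh)=\big(-\rho(a_f)\cdot g\big)h+g\big(-\rho(a_f)\cdot h\big)=\{f,g\}h+g\{f,h\}.$$
Note that neither of these two steps uses $\rho(\xi)=0$; they hold for any contact Lie algebroid.

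The hypothesis enters decisively in the Jacobi identity. The first thing I would record is its immediate structural consequence: since the scalar $h=\rho(\xi)\cdot f$ in (\ref{df}) vanishes, the correction term disappears and $\mathrm{d}f=i_{a_f}\mathrm{d}\eta$ holds exactly; equivalently $i_{a_g}\mathrm{d}\eta=\mathrm{d}g$ for every $g$ (recall also $\eta(a_f)=0$). This clean relation is precisely the point at which $\rho(\xi)=0$ is used, and it is what makes the subsequent computation close up.

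The key step is to establish
$$i_{[a_f,a_g]}\mathrm{d}\eta=-\mathrm{d}\{f,g\}.$$
I would derive this from the operator identity $i_{[a_f,a_g]}=[\mathcal{L}_{a_f},i_{a_g}]$ recalled in Section 2, applied to $\mathrm{d}\eta$: this gives $i_{[a_f,a_g]}\mathrm{d}\eta=\mathcal{L}_{a_f}(i_{a_g}\mathrm{d}\eta)-i_{a_g}(\mathcal{L}_{a_f}\mathrm{d}\eta)$. Here $\mathcal{L}_{a_f}\mathrm{d}\eta=\mathrm{d}(i_{a_f}\mathrm{d}\eta)=\mathrm{d}\,\mathrm{d}f=0$ because $(\mathrm{d}^{\mathcal{A}})^2=0$, while $i_{a_g}\mathrm{d}\eta=\mathrm{d}g$ by the previous paragraph; combining $\mathcal{L}_{a_f}\circ\mathrm{d}=\mathrm{d}\circ\mathcal{L}_{a_f}$ with $\mathcal{L}_{a_f}g=\rho(a_f)\cdot g=-\{f,g\}$ yields $\mathcal{L}_{a_f}\mathrm{d}g=-\mathrm{d}\{f,g\}$, which is the claim. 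Evaluating both sides on $a_h$ and using $\rho(a_h)\cdot\{f,g\}=-\{h,\{f,g\}\}$ then gives the computable expression $\mathrm{d}\eta([a_f,a_g],a_h)=\{h,\{f,g\}\}$.

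Finally I would expand the vanishing closed $3$-form $\mathrm{d}^{\mathcal{A}}(\mathrm{d}\eta)=0$ on the triple $(a_f,a_g,a_h)$ via the Koszul formula for $\mathrm{d}^{\mathcal{A}}$, replacing each $\mathrm{d}\eta(a_\bullet,a_\bullet)$ by the corresponding bracket, each anchor term $\rho(a_p)\cdot\{q,r\}$ by $-\{p,\{q,r\}\}$, and each term $\mathrm{d}\eta([a_\bullet,a_\bullet],a_\bullet)$ by the expression just derived. The three terms coming from the anchor part and the three coming from the bracket part then turn out to be equal, so $0$ equals twice the (suitably signed) Jacobiator $\{f,\{g,h\}\}+\{g,\{h,f\}\}+\{h,\{f,g\}\}$, which must therefore vanish. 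The main obstacle is bookkeeping: one must get every sign in the Koszul expansion and in the Lie-derivative computation right so that the six terms genuinely coincide, and one must make sure $\rho(\xi)=0$ is invoked exactly where $i_{a_g}\mathrm{d}\eta=\mathrm{d}g$ is used --- otherwise the surviving term $(\rho(\xi)\cdot g)\eta$ contaminates $i_{[a_f,a_g]}\mathrm{d}\eta$ and the identity fails.
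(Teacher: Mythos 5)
Your proposal is correct, and its engine is the same as the paper's: everything turns on the Cartan-calculus identity $i_{[a_f,a_g]}\mathrm{d}\eta=[\mathcal{L}_{a_f},i_{a_g}]\mathrm{d}\eta$, with the hypothesis $\rho(\xi)=0$ entering exactly where you say it does, namely to make $i_{a_f}\mathrm{d}\eta=\mathrm{d}f$ hold on the nose so that $\mathcal{L}_{a_f}\mathrm{d}\eta=0$. Where you diverge is in the bookkeeping at either end. For the Leibniz rule the paper identifies $fa_g+ga_f$ as the Hamiltonian section of $fg$ and expands $\mathrm{d}\eta(fa_g+ga_f,a_h)$, whereas you quote $\{f,g\}=-\rho(a_f)\cdot g$ from (\ref{[f,g]}) and use that $\rho(a_f)$ is a derivation; the two are equivalent, and you are right that neither step needs $\rho(\xi)=0$. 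For the Jacobi identity the paper reads off from the key identity that $[a_f,a_g]-\eta([a_f,a_g])\xi$ is the Hamiltonian section of $\{f,g\}$ (using $\rho(\xi)=0$ a second time to drop $\eta([a_f,a_g])\rho(\xi)$) and then writes all three double brackets as iterated directional derivatives via (\ref{[f,g]}), letting them cancel pairwise; you instead evaluate the key identity on $a_h$ to get $\mathrm{d}\eta([a_f,a_g],a_h)=\{h,\{f,g\}\}$ and feed this into the Koszul expansion of $\mathrm{d}(\mathrm{d}\eta)=0$, obtaining twice the Jacobiator. Both closings are sound; yours avoids naming the Hamiltonian section of $\{f,g\}$ explicitly, at the price of the six-term sign chase you acknowledge. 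One point in your favour: with the standard convention $i_{a}i_{b}\mathrm{d}\eta=\mathrm{d}\eta(b,a)$, your sign $i_{[a_f,a_g]}\mathrm{d}\eta=-\mathrm{d}\{f,g\}$ is the internally consistent one; the paper records it with a $+$, a slip which, taken literally, would make its three displayed terms sum to $2[\rho(a_f),\rho(a_g)]\cdot h$ rather than $0$.
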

\begin{proof}
	$\{\,,\}$ is $\mathbb{R}$-bilinear and skew-symmetric. Now if $a_f,a_g,a_h$ are Hamiltonian sections of $f,g,h\in C^\infty(M)$, respectively, then
	\begin{align*}
	\mathrm{d}(fg)=&f\mathrm{d}g+g\mathrm{d}f\\
	=&fi_{a_g}\mathrm{d}\eta+gi_{a_f}\mathrm{d}\eta\\
	=&i_{(fa_g+ga_f)}\mathrm{d}\eta.\\
	\end{align*}
	Moreover, $\eta(fa_g+ga_f)=0$. Thus $fa_g+ga_f$ is the Hamiltonian section of $fg$. Hence
	\begin{align*}
    	\{fg,h\}=&\mathrm{d}\eta(fa_g+ga_f,a_h)\\
	   =&f\mathrm{d}\eta(a_g,a_h)+g\mathrm{d}\eta(a_f,a_h)\\
	   =&f\{g,h\}+g\{f,h\},
	\end{align*}
	i.e., $\{\,,\}$ satisfies the product rule.\\
	Furthermore,
	\begin{align*}
	i_{[a_f,a_g]}\mathrm{d}\eta=&[\mathcal{L}_{a_f},i_{a_g}]\mathrm{d}\eta\\
	=&\mathrm{d}i_{a_f}i_{a_g}\mathrm{d}\eta+i_{a_f}\mathrm{d}i_{a_g} \mathrm{d}\eta-i_{a_g}\mathrm{d}i_{a_f}\mathrm{d}\eta-i_{a_g}
i_{a_f}\mathrm{d}\mathrm{d}\eta\\
	=&\mathrm{d}\eta(a_f,a_g)\\
	=&\{f,g\}.
	\end{align*}
	Thus $[a_f,a_g]-\eta([a_f,a_g])\xi$ is the Hamiltonian section of $\{f,g\}$.\\
	Now we can prove the Jacobi identity. We use (\ref{[f,g]}) as follows
	$$\begin{array}{lllll}

	\{f,\{g,h\}\}&=&-\{f,\rho(a_g).h\}&=&\rho(a_f).\rho(a_g).h\\ \\
	\{g,\{h,f\}\}&=&\{g,\rho(a_f).h\}&=&-\rho(a_g).\rho(a_f).h\\ \\
	\{h,\{f,g\}\}&=&\rho(a_{\{f,g\}}).h&=&\rho([a_f,a_g]).h
	\end{array}$$
	i.e.,
	$$\{f,\{g,h\}\}+\{g,\{h,f\}\}+\{h,\{f,g\}\}=0.$$
\end{proof}

\begin{lemma}\label{L sub D}
 If $(\mathcal{A},\eta)$ is a contact Lie algebroid such that $ L_p\subseteq\mathrm{D}_p$ for some $p\in M$, then $L_p=0$.
\end{lemma}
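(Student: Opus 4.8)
The plan is to prove that every $u\in L_p$ lies in the radical of $d\eta$ at $p$, and then to note that this radical meets $L_p$ only in $0$; hence $L_p=0$.

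First I would pin down the structure of $d\eta$ at $p$. Since $i_\xi d\eta=0$, the Reeb value $\xi_p$ lies in the radical $\mathrm{rad}(d\eta_p)=\{w\in\mathcal{A}_p:\ i_w d\eta_p=0\}$, while the quoted fact that $d\eta|_{\mathrm{D}}$ is nondegenerate gives $\mathrm{rad}(d\eta_p)\cap\mathrm{D}_p=0$. Because $\eta(\xi)=1$ yields the splitting $\mathcal{A}_p=\mathrm{D}_p\oplus\langle\xi_p\rangle$, these two facts force $\mathrm{rad}(d\eta_p)=\langle\xi_p\rangle$. Moreover, if some $u\in L_p$ equalled $c\,\xi_p$, then from $L_p\subseteq\mathrm{D}_p=\ker\eta_p$ and $\eta(\xi_p)=1$ we would get $c=\eta(u)=0$; thus $L_p\cap\langle\xi_p\rangle=0$. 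Consequently it suffices to establish $i_u d\eta_p=0$ for every $u\in L_p$.

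To compute $i_u d\eta_p$ I would pass to the leaf $\mathcal{N}$ of the characteristic foliation through $p$, on which $\mathcal{A}_\mathcal{N}$ is transitive and $L=\ker\rho$ is a genuine Lie algebra bundle; since $\rho$ is a bracket homomorphism, $L$ is then an ideal, so $[\Gamma(L),\Gamma(\mathcal{A}_\mathcal{N})]\subseteq\Gamma(L)$. Fix $v\in\mathcal{A}_p$ and extend $u,v$ to sections $S,T$ with $S$ a section of $L$, so that $\rho(S)\equiv 0$. Applying the Lie algebroid differential formula to $\eta$ gives, at $p$,
\[
d\eta(u,v)=\rho(S_p)\cdot\eta(T)-\rho(v)\cdot\eta(S)-\eta([S,T])_p=-\,\rho(v)\cdot\eta(S)|_p-\eta([S,T])_p,
\]
the first summand vanishing because $\rho(S_p)=\rho(u)=0$. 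As $L$ is an ideal, $[S,T]$ is again a section of $L$, whence $[S,T]_p\in L_p\subseteq\mathrm{D}_p$ and $\eta([S,T])_p=0$. Everything therefore collapses to the middle term $\rho(v)\cdot\eta(S)|_p$.

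The crux is to annihilate this middle term. The value $\eta(S)$ vanishes at $p$ (as $u\in L_p\subseteq\mathrm{D}_p$), but $\rho(v)\cdot\eta(S)|_p$ is a directional derivative of the function $\eta\circ S$ and is not controlled by the pointwise hypothesis alone. The natural remedy is to choose $S$ as a section of $L\cap\mathrm{D}$, i.e. a kernel section that also lies in $\ker\eta$ near $p$; then $\eta\circ S\equiv 0$, the middle term disappears, and we obtain $i_u d\eta_p=0$, forcing $u=0$ by the first paragraph. I expect this extension step to be the main obstacle: it presupposes that $L$ sits inside $\mathrm{D}$ not merely at the single point $p$ but along a neighborhood in $\mathcal{N}$, so that $L\cap\mathrm{D}$ is a subbundle through which $u$ can be prolonged. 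Showing that the transitive/ideal structure of $L$ already forces this neighborhood containment — or else reading the hypothesis as $L\subseteq\mathrm{D}$ of subbundles — is precisely the step on which the argument turns; once it is secured, the remaining radical computation above completes the proof.
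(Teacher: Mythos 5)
Your reduction --- show that every $u\in L_p$ is annihilated by $d\eta_p$ on $\mathrm{D}_p$ (or lies in $\mathrm{rad}(d\eta_p)=\langle\xi_p\rangle$, which meets $L_p$ trivially), using the Cartan formula and the ideal property of $\Gamma(L)$ --- is exactly the route the paper takes, and the obstacle you single out is the real one, not a removable technicality. After the first and third terms of $d\eta_p(u,v)=\rho(u)\cdot\eta(T)-\rho(v)\cdot\eta(S)-\eta([S,T]_p)$ are disposed of, the middle term $\rho(v)\cdot\eta(S)|_p$ is a genuine first-order quantity: it sees the $1$-jet of $\eta\circ S$ at $p$, while the hypothesis only controls its value at $p$. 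Killing it requires extending $u$ to a section of $L\cap\mathrm{D}$, which presupposes that $L\subseteq\mathrm{D}$ holds on a neighbourhood of $p$ in the leaf, not merely at $p$. The paper's own proof stumbles at the same spot: it extends $S_p$ to a section $S$ of $\mathrm{D}$ and then invokes ``the ideal property of $\Gamma L$'' to get $\eta([S,T]_p)=0$, but that property applies only when $S\in\Gamma(L)$, and a section of $\mathrm{D}$ through a vector of $L_p$ need not lie in $L$; with $S\in\Gamma(L)$ instead, it is the middle term that survives.

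Moreover this gap cannot be closed, because the lemma is false under the stated pointwise hypothesis. Take $M=\{(x,y)\in\mathbb{R}^2:\ x<2\}$ and $\mathcal{A}=TM\oplus(M\times\mathbb{R})$ with anchor the projection and bracket $[X+fe,Y+ge]=[X,Y]+(Xg-Yf)e$, and set $\eta(X+fe)=\tfrac12(x\,dy-y\,dx)(X)+(x-1)f$. Then $d\eta(X+fe,Y+ge)=(dx\wedge dy)(X,Y)+(Xh)g-(Yh)f$ with $h=x-1$, so $(\eta\wedge d\eta)(\partial_x,\partial_y,e)=\tfrac{x}{2}-1\neq0$ on $M$ and $(\mathcal{A},\eta)$ is a contact Lie algebroid of rank $3$. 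At $p=(1,0)$ one has $\eta_p(e)=0$, hence $L_p=\langle e\rangle\subseteq\mathrm{D}_p$, yet $L_p\neq0$; indeed $\partial_x\in\mathrm{D}_p$ and $d\eta_p(e,\partial_x)=-\partial_x(x-1)=-1\neq0$, which is precisely your uncontrolled middle term refusing to vanish. So the statement needs a stronger hypothesis --- e.g.\ $L\subseteq\mathrm{D}$ as subbundles over a neighbourhood of $p$ in its leaf, under which your extension through $L\cap\mathrm{D}$ completes the argument verbatim --- and your flagging of the extension step as ``the step on which the argument turns'' is the correct diagnosis of a defect that the paper's proof shares.
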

\begin{proof} For $S_p\in L_p\subseteq\mathrm{D}_p$, one can extend $S_p$ to a section $S$ of $\mathrm{D}$. Now for $T\in \Gamma(\mathrm{D})$ we have
 $$d\eta_p(S_p,T_p)= \rho(S_p).\eta(T)-\rho(T_p).\eta(S)-\eta([S,T]_p)=0,$$
where the last identity is a result of ideal property of $\Gamma L$. Since $\eta$ is contact, $\mathrm{d}\eta_{|_\mathrm{D}}$ nondegenerate, Thus $S_p=0$, i.e., $L_p=0$.
\end{proof}
\begin{lemma}\label{xiL}
For an almost contact structure $(\varphi,\xi,\eta)$ on a Lie algebroid $\mathcal{A}$ if $\eta$ is contact and $\varphi(L_p)\subseteq L_p$, for some $p\in M$, then $L_p=0$ or $\rho(\xi_p)=0$.
\end{lemma}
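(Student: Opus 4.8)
The plan is to reduce the statement to Lemma \ref{L sub D} by showing that, in the nontrivial case, $L_p$ is contained in $\mathrm{D}_p=\ker\eta$. The first observation is that, since $L_p=\ker\rho_p$, the condition $\rho(\xi_p)=0$ is equivalent to $\xi_p\in L_p$. Hence if $\xi_p\in L_p$ the conclusion $\rho(\xi_p)=0$ holds at once, and all the work lies in the complementary case $\xi_p\notin L_p$, where the goal becomes to prove $L_p=0$.

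The key computation is purely pointwise linear algebra driven by the defining identity of an almost contact structure, $\varphi^2=-\mathrm{id}_\mathcal{A}+\eta\otimes\xi$. Take an arbitrary $S_p\in L_p$. By the hypothesis $\varphi(L_p)\subseteq L_p$ we have $\varphi(S_p)\in L_p$, and applying $\varphi$ once more gives $\varphi^2(S_p)\in\varphi(L_p)\subseteq L_p$. Rewriting $\varphi^2(S_p)=-S_p+\eta(S_p)\xi_p$, one obtains
$$\eta(S_p)\,\xi_p=\varphi^2(S_p)+S_p\in L_p.$$
Since we are in the case $\xi_p\notin L_p$, this membership forces $\eta(S_p)=0$; otherwise $\xi_p$ would be a scalar multiple of an element of $L_p$ and hence lie in $L_p$, a contradiction.

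As $S_p\in L_p$ was arbitrary, we conclude $\eta|_{L_p}=0$, that is $L_p\subseteq\mathrm{D}_p$. Lemma \ref{L sub D} then yields $L_p=0$, completing the case $\xi_p\notin L_p$ and hence the proof. I expect the only subtlety to be the organizational one of correctly identifying the dichotomy: recognizing that $\rho(\xi_p)=0\iff\xi_p\in L_p$ is what allows the two conclusions of the lemma to correspond exactly to the two positions of $\xi_p$ relative to $L_p$. Once that is in place, the $\varphi$-invariance of $L_p$ together with the almost contact identity pins $\eta(S_p)\xi_p$ inside $L_p$, and there are no analytic obstacles to overcome.
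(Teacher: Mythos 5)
Your proof is correct and is essentially the paper's own argument read in the contrapositive: the paper assumes $L_p\neq 0$, extracts $S_p\in L_p$ with $\eta_p(S_p)\neq 0$ via Lemma \ref{L sub D}, and uses the same identity $\eta(S_p)\xi_p=\varphi^2(S_p)+S_p$ together with $\varphi$-invariance to conclude $\xi_p\in L_p$. The reorganization into the dichotomy $\xi_p\in L_p$ versus $\xi_p\notin L_p$ changes nothing of substance.
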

\begin{proof}
Choose $p\in M$ such that $\varphi(L_p)\subseteq L_p$. If $L_p\neq0$ then by Lemma \ref{L sub D} there exists $S_p\in L_p$ such that $\eta_p(S_p)\neq0$. Thus
$$\eta_p(S_p)\xi_p=\varphi^2(S_p)+S_p.$$
Since $\varphi$ preserves $L_p$, $\varphi^2(S_p)\in L_p$, and so $\xi_p\in L_p$.
\end{proof}
Let $(\mathcal{A},\varphi,\xi,\eta,g)$ be a Riemannian contact Lie algebroid such that $\rho$ is surjective and $\varphi$ preserves $L$. If $L= 0$ then $\mathcal{A}=TM$ and our Riemannian contact Lie algebroid reduces to an ordinary contact metric manifold. So let $L\neq 0$. Since $\xi\in \Gamma L$, by (\ref{xiL}), we may write
$$\mathrm{D}=(\mathrm{D}\cap L)\oplus L^\perp.$$
Moreover, for $S\in \Gamma L^\perp$ and $T\in \Gamma L$, 
\begin{equation}
\mathrm{d}\eta(S,T)=g\big(S,\varphi(T)\big)=0.
\end{equation}
Let $\lambda:TM\rightarrow\mathcal{A}$ be the splitting map corresponding to $\mathcal{A}=L^\perp\oplus L$. Putting $\omega:=\lambda^*(\mathrm{d}\eta)$, one can see that $\omega$ is closed and non-degenerate, i.e., $(M,\omega)$ is a symplectic manifold. Clearly $\varphi$ preserves $L^\perp$. Therefore, $\varphi$ and $g$ induce an almost complex structure $J_M$ on $\mathcal{A}$ (with the decomposition $L+L^\perp$) and a Riemanninan metric $g_M$ on $M$, respectively. Thus, $(\omega,J_M,g_M)$ is a compatible triple on $M$. The interesting  point is that, the Poisson structure induced by Theorem \ref{pisson} coincides with the Poisson structure induced by $\omega$.

\end{document}